\documentclass{article}

\usepackage[utf8]{inputenc}
\usepackage{amsmath,amsthm,amssymb,amsfonts}
\usepackage{graphicx}
\usepackage{xcolor}
\usepackage{enumitem}
\usepackage[unicode,colorlinks=true,pagebackref=false]{hyperref}

\usepackage{algorithm,algpseudocode}

\pagestyle{plain}
\usepackage[
    textheight=9in,
    textwidth=6.5in,
    top=1in,
    headheight=14pt,
    headsep=25pt,
    footskip=30pt]{geometry}

\newtheorem{theorem}{Theorem}
\newtheorem{remark}[theorem]{Remark}

\newtheorem{corollary}[theorem]{Corollary}
\newtheorem{proposition}[theorem]{Proposition}

\numberwithin{equation}{section}
\numberwithin{theorem}{section}

\def\algorithmname{Algorithm}

\begin{document}

\title{Computing the unitary best approximant to the exponential function}
\author{Tobias Jawecki\thanks{tobias.jawecki@gmail.com}}
\maketitle
%%%%%%%%%%%%%%%%%%%%%%%%%%%%%%%%%%%%%%%%%%%%%%%%%%%%%%%%%%%%%%%%
%%%%%%%%%%%%%%%%%%%%%%%%%%%%%%%%%%%%%%%%%%%%%%%%%%%%%%%%%%%%%%%%
\begin{abstract}
Unitary best approximation to the exponential function on an interval on the imaginary axis has been introduced recently. In the present work two algorithms are considered to compute this best approximant: an algorithm based on rational interpolation in successively corrected interpolation nodes and the AAA-Lawson method. Moreover, a~posteriori bounds are introduced to evaluate the quality of a computed approximant and to show convergence to the unitary best approximant in practice. Two a~priori estimates---one based on experimental data, and one based on an asymptotic error estimate---are introduced to determine the underlying frequency for which the unitary best approximant achieves a given accuracy. Performance of algorithms and estimates is verified by numerical experiments. In particular, the interpolation-based algorithm converges to the unitary best approximant within a small number of iterations in practice.
\end{abstract}

\section{Introduction and overview}\label{sec:intro}
We consider rational approximation to the exponential function on an interval on the imaginary axis, i.e., for a given {\em frequency} $\omega>0$,
\begin{equation*}%\label{eq:problem1}
r(\mathrm{i} x)\approx \mathrm{e}^{\mathrm{i} \omega x},~~~x\in[-1,1].
\end{equation*}
Unitary best approximation for this problem was introduced recently in~\cite{JS23u}.
We refer to rational functions $r=p/q$ where $p$ and $q$ have degree $\leq n$ as $(n,n)$-rational functions.
Moreover, we refer to a rational function $r$ as unitary if
\begin{equation*}%\label{eq:unitary}
|r(\mathrm{i} x)|=1~~~\text{for $x\in\mathbb{R}$},
\end{equation*}
and we let $\mathcal{U}_n$ denote the class of unitary $(n,n)$-rational functions, i.e.,
\begin{equation*}
\mathcal{U}_n := \{p/q ~|~ \text{$p,q$ are polynomials of degree $\leq n$, $q\neq 0$, and $p/q$ is unitary}\}.
\end{equation*}
For a complex function $f$ we make use of the notation
\begin{equation}\label{eq:defnorm}
\|f\| := \max_{x\in[-1,1]}|f(\mathrm{i} x)|.
\end{equation}
In line with~\cite{JS23u}, for a given degree $n$ and frequency $\omega$ we refer to $\widetilde{r}\in\mathcal{U}_n$ as a unitary best approximant if
\begin{equation}\label{eq:uniformerror}
\|\widetilde{r} -  \exp(\omega \cdot)\|
= \min_{r\in\mathcal{U}_n} \|r - \exp(\omega \cdot)\|.
\end{equation}
We refer to a $(n,n)$-rational function $r$ as {\em degenerate} if $r$ can be equivalently described as a $(n-k,n-k)$-rational function for some $k>0$, and we refer to $r$ as non-degenerate otherwise.
Following~\cite{JS23u}, unitary best approximants exist for $\omega>0$ and are unique and non-degenerate for $\omega\in(0,(n+1)\pi)$. We assume the case $\omega\in(0,(n+1)\pi)$ throughout the present work.

\begin{remark}
While the unitary best approximation can be most accurately understood as an approximation $r(z)\approx \mathrm{e}^{\omega z}$ for $z\in\mathrm{i} [-1,1]$, we also refer to $r$ as an approximation to $\mathrm{e}^{\mathrm{i} \omega x}$ in an equivalent manner, i.e., $r(\mathrm{i} x)\approx \mathrm{e}^{\mathrm{i} \omega x}$ for $x\in[-1,1]$.
\end{remark}

\begin{remark}\label{rmk:approxww}
By rescaling function arguments, the unitary best approximation can equivalently be formulated as an approximation $r(z)\approx \mathrm{e}^{z}$ for $z\in\mathrm{i}[-\omega,\omega]$.
In particular, the rational function $\widehat{r}(z) := \widetilde{r}(z/\omega)$ satisfies
\begin{equation*}%\label{eq:uniformerrorww}
\max_{x\in[-\omega,\omega]} |\widehat{r}(\mathrm{i} x) - \mathrm{e}^{\mathrm{i} x}|
= \|\widetilde{r}-\exp(\omega \cdot)\|.
\end{equation*}
\end{remark}

\subsection{Algorithms}
In the present work we consider the following two approaches to computing the unitary best approximant:
\begin{enumerate}[label={(\roman*)}]
\item\label{item:algbrib} 
Rational interpolation to $\mathrm{e}^{\mathrm{i} \omega x}$ in successively corrected interpolation nodes, and
\item\label{item:alglawson}
the AAA-Lawson method~\cite{NST18,NT20}.
\end{enumerate}
The approach~\ref{item:algbrib} is motivated by the ``Second Direct Method'' in~\cite{Ma63} and the BRASIL algorithm~\cite{Ho21} which utilize similar ideas to compute rational best approximants in a real setting. The former is also referred to as Maehly's second method in the literature~\cite{CS63,Du65,Du66,LR73}, and a modified version is introduced in~\cite{Fra76}.
For the approach~\ref{item:alglawson} we consider the AAA-Lawson method~\cite{NT20} where AAA stands for ``adaptive Antoulas--Anderson''. The first phase of the AAA-Lawson consists of the AAA method~\cite{NST18} which constructs a rational approximant in a greedy manner by rational interpolation in nodes adaptively selected from a given set of {\it test nodes}. The second phase of the AAA-Lawson method consists of a Lawson-type iteration in which accuracy is further improved by minimizing a successively reweighted least-squares problem without prescribing interpolation conditions. For our numerical experiments we consider equispaced test nodes and test nodes which are further adjusted on the run~\cite{DNT24}. For comparison purposes we also provide results for the AAA method without Lawson-type iterations, which aims to construct near-best approximants.

It was recently shown in~\cite{JS24} that rational interpolants to the exponential function on the imaginary axis (as utilized in~\ref{item:algbrib}) and rational approximants to $\mathrm{e}^{\mathrm{i} \omega x}$  generated by the AAA and AAA-Lawson methods for real-valued test nodes (as utilized in~\ref{item:alglawson}) are unitary. Thus, these approaches provide candidates for the unitary best approximant.

Other approaches to compute best approximants which are not considered in the present work are the \texttt{rkfit} method~\cite{BG15,BG17b} which aims to compute best approximants which minimize a least-squares error, algorithms in~\cite{EW76,IT93} which aim to compute complex Chebyshev approximants which are distinct to the unitary best approximant~\cite{Ja24ur}, and rational variants of the Remez method~\cite{PT09,FNTB18} which aim to compute Chebyshev approximants in a real setting.

\medskip
\noindent{\bf Software implementation.}
A Python implementation of the interpolation-based algorithm is provided by the authors in a Github repository\footnote{\url{https://github.com/newbisi/rexpi/}}. % and also accessible in the PyPI package manager\footnote{\url{https://pypi.org/project/rexpi/}}.
Matlab implementations of the AAA and AAA-Lawson methods are available as part of the Chebfun package~\cite{DHT14}.

\subsection{Aim of the present work}

The focus of the present work is on computing the unitary best approximant and ensuring the quality of the results in practice.
In particular, the interpolation-based algorithm that combines strategies of Maehly's second method and the BRASIL algorithm, as described in Subsection~\ref{subsec:bestalgorithm}, accomplishes this task for a wide range of degrees $n$ and frequencies $\omega$.
%The main focus of the present work is to provide an algorithm that succeeds in computing the unitary best approximant for a wide range of degrees $n$ and frequencies $\omega$ showing that unitary best approximation is available in practice. In particular, the interpolation-based algorithm that combines strategies of Maehly's second method and the BRASIL algorithm, as described in Subsection~\ref{subsec:bestalgorithm}, accomplishes this task.
Comparison between different algorithms is not carried out in full detail, and the performance of the AAA-Lawson method to approximate $\mathrm{e}^{\mathrm{i} \omega x}$ might be further optimized in the future by adjusting the underlying set of test nodes.

We show that the uniform error of the unitary best approximant is sandwiched between the uniform error of a rational approximant with and without a scaling factor based on an error in uniformity, assuming an interpolatory setting. The error in uniformity provides an a~posteriori bound for a relative distance between the errors of a computed approximant and the unitary best approximant. We further show that an approximant converges to the unitary best approximant if the error in uniformity approaches zero. These results are utilized as stopping criteria for algorithms of type~\ref{item:algbrib} and to verify convergence of algorithms.

We also consider a~priori estimates for $\omega$ w.r.t\ $n$ s.t.\ the computed unitary best approximant attains a given error level. Such estimates have relevance for applications when approximants of a certain accuracy are required, and help to avoid settings where computing the unitary best approximant might fail in practice. The latter might be the case if the underlying unitary best approximant is nearly singular (the case $\omega\to (n+1)\pi$), or the computed approximant does not show equioscillation properties due to limitations of computer arithmetic (the case $\omega\to 0$).

\subsection{Applications and relevance}

Rational approximation to the exponential function has some relevance for approximation to the matrix exponential and time integration of systems of differential equations~\cite{ML03}. In this context, the approximation to the exponential function on the imaginary axis can be used to approximate the exponential operator of skew-Hermitian matrices whose eigenvalues lie on the imaginary axis. Such problems arise in particular for time-dependent Schr{\"o}dinger equations~\cite{Lu08}. It is shown in~\cite{JS23u} that unitary best approximants satisfy various properties that make them suitable for geometric numerical integration~\cite{HLW06}. While unitarity is typically desirable for applications, it can be understood as a restriction in terms of accuracy of best approximations. However, it is shown in \cite{Ja24ur} that the restriction to unitarity for rational best approximation to the exponential function on the imaginary axis is not severe in this context. Furthermore, the unitary best approximation, since it satisfies stability properties~\cite{JS23u}, is also suitable for slightly dissipative problems. 

Properties of the unitary best approximation are also compared with properties of polynomial Chebyshev approximation~\cite{TK84} (cf.~\cite[Subsection~III.2.1]{Lu08} for an overview) and the Pad\'e approximation~\cite{BG96}, both widely used in the context of Schr{\"o}dinger equations, in the introductory sections of~\cite{JS23u,JS24}. In particular, the unitary best approximation, which combines uniform accuracy of Chebyshev approximations with geometric properties of the Pad\'e approximation, shows great potential for use in future time integrators.

As a novelty of the present work, the interpolation-based algorithm for computing the unitary best approximant, which was partly also mentioned in \cite[Subsection~2.1.5]{JS23u}, is provided in full detail, performance of algorithms is tested and discussed, and convergence results and a~priori estimates for the underlying frequency are introduced. Maehly's second method is mentioned for the exponential function on the imaginary axis for the first time in the present work and its strategy for interpolation nodes correction provides excellent results for this application. The relation between the BRASIL algorithm and Maehly's second method, which is also mentioned for the first time in the present work to the authors' knowledge, can also have relevance for best approximations to other functions. 
 
Rational interpolants in~\ref{item:algbrib} and the AAA and AAA-Lawson methods~\ref{item:alglawson} are typically utilized in barycentric rational form. The poles, residues and zeros of barycentric rational functions are available in practice, cf.~\cite{NST18,Ho21}. Thus, computed approximants can be evaluated using product form, barycentric rational form, or in certain settings also partial fraction form (for an overview on partial fractions see~\cite[Chapter~7]{He74a}). In the context of approximation to the action of a related matrix exponential operator this will be the topic of a future work.

In the present work we show that the interpolation-based algorithm succeeds to compute the unitary best approximant for very large degrees in practice. The availability of high-degree approximants, together with the potential use of partial fraction decomposition, provides further advantages of the unitary best approximation over the Pad\'e approximation.

\subsection{Outline of the present work}
We recall some properties of unitary best approximation in Section~\ref{sec:urb}, and introduce the error in uniformity for an interpolatory setting in subsections~\ref{subsec:alternating} and~\ref{subsec:errsandwich}. This setting is particularly relevant for algorithms of type~\ref{item:algbrib} and approximants which are sufficiently close to the unitary best approximant.
In Corollary~\ref{cor:errsandwich} we show that the uniform error of a rational interpolant provides bounds which sandwich the uniform error of the unitary best approximant in practical settings, and we show that the error in uniformity can be used to determine convergence to the unitary best approximant.
A~priori estimates to determine $\omega$ for a given degree $n$ s.t.\ the unitary best approximant attains a given error level are provided in Section~\ref{sec:setting}.

In Section~\ref{sec:algorithms} we provide an interpolation-based algorithm~\ref{item:algbrib}, and we recall the AAA-Lawson method~\ref{item:alglawson}.
In Section~\ref{sec:convergence} performance of estimates and algorithms is illustrated and discussed using numerical experiments.

\section{Unitary best approximation}\label{sec:urb}

Throughout the present section we consider the degree $n$ and frequency $\omega$ to be given. Under the condition $\omega\in(0,(n+1)\pi)$ the unitary best approximant is uniquely characterized by an equioscillating phase error, as summarized in the following.

\noindent{\bf Approximation and phase errors.}
For a rational approximant $r(\mathrm{i} x)\approx \mathrm{e}^{\mathrm{i} \omega x}$ we refer to $|r(\mathrm{i} x)-\mathrm{e}^{\mathrm{i} \omega x}|$ as {\em approximation error}. For the uniform approximation error we make use of the norm notation~\eqref{eq:defnorm}, i.e., 
\begin{equation*}
\|r - \exp(\omega \cdot)\| = \max_{x\in[-1,1]}|r(\mathrm{i} x)-\mathrm{e}^{\mathrm{i} \omega x}|.
\end{equation*}
For a unitary rational function $r\in\mathcal{U}_n$ we remark $|r(\mathrm{i} x)-\mathrm{e}^{\mathrm{i} \omega x}|\leq 2$, and we also refer to $r$ as having a maximal approximation error if $\|r - \exp(\omega \cdot)\|=2$.

Following \cite[Proposition~4.2]{JS23u}, for $r\in\mathcal{U}_n$ with
$\| r - \exp(\omega \cdot) \| <2$,
there exists a unique phase function $g:\mathbb{R}\to\mathbb{R}$ with
\begin{equation}\label{eq:phaseerrdef}
r(\mathrm{i} x) = \mathrm{e}^{\mathrm{i} g(x)},~~~\text{s.t.}~~\max_{x\in[-1,1]} |g(x) - \omega x|<\pi,
\end{equation}
where $g(x) - \omega x$ is also referred to as {\em phase error} of $r(\mathrm{i} x) = \mathrm{e}^{\mathrm{i} g(x)}\approx \mathrm{e}^{\mathrm{i} \omega x}$.

\begin{remark}
Assume $\|r-\exp(\omega\cdot)\|<2$.
Since $r(\mathrm{i} x)/ \mathrm{e}^{\mathrm{i} \omega x} = \mathrm{e}^{\mathrm{i} (g(x)- \omega x)}$ and~\eqref{eq:phaseerrdef}, i.e., $g(x)- \omega x\in(-\pi,\pi)$ for $x\in[-1,1]$, the phase error satisfies
\begin{equation}\label{eq:phaseerrangle}
g(x) - \omega x = \operatorname{angle}(r(\mathrm{i} x)/ \mathrm{e}^{\mathrm{i} \omega x}) \in (-\pi,\pi),~~~x\in[-1,1].
\end{equation}
For the case $\|r-\exp(\omega\cdot)\|=2$, the phase function $g$ is only defined up to a multiple of $2\pi$ which carries over to~\eqref{eq:phaseerrangle}.
\end{remark}

\noindent{\bf Unitary best approximation.}
In the sequel we refer to the unitary best approximation as $\widetilde{r}$~\eqref{eq:uniformerror}.
For $\omega\in(0,(n+1)\pi)$ the unitary best approximant attains $\|\widetilde{r}-\exp(\omega\cdot)\|<2$ \cite[Proposition~4.5]{JS23u} and we let $\widetilde{g}$ denote its phase function, i.e.,
\begin{equation*}
\widetilde{r}(\mathrm{i} x) = \mathrm{e}^{\mathrm{i} \widetilde{g}(x)} \approx \mathrm{e}^{\mathrm{i} \omega x}.
\end{equation*}
%Certainly, $\widetilde{r}$ and $\widetilde{g}$ depend on $\omega$ and $n$.
Following \cite[Theorem~5.1]{JS23u}, for $\omega\in(0,(n+1)\pi)$ the unitary best approximation is uniquely characterized by an equioscillating phase error, i.e., there exist points $\eta_1<\ldots<\eta_{2n+2}\in[-1,1]$ with $\eta_1=-1$, $\eta_{2n+2}=1$ s.t.\ \cite[Theorem~5.1 and Proposition~7.5]{JS23u}
\begin{subequations}
\begin{equation}\label{eq:phaseerrequioscillates}
\widetilde{g}(\eta_j) - \omega \eta_j
= (-1)^{j+1} \max_{x\in[-1,1]} |\widetilde{g}(x) - \omega x|,\quad j=1,\ldots,2n+2,
\end{equation}
Moreover, following \cite[Corollary~5.2]{JS23u} the points $\eta_1,\ldots,\eta_{2n+2}$ are exactly the points in $[-1,1]$ at which the unitary best approximant attains its uniform approximation error, i.e., 
\begin{equation}\label{eq:urbequerr}
|\widetilde{r}(\mathrm{i} \eta_j) -  \mathrm{e}^{\mathrm{i} \omega \eta_j}|
= \|\widetilde{r} -  \exp(\omega \cdot)\|,~~~j=1,\ldots,2n+2,
\end{equation}
and there exist $2n+1$ interpolation nodes $x_1,\ldots,x_{2n+1}$ with
\begin{equation}\label{eq:urbinterpolate}
\widetilde{r}(\mathrm{i} x_j) = \mathrm{e}^{\mathrm{i} \omega x_j}, ~~~ x_j\in(\eta_j,\eta_{j+1}),~~~j=1,\ldots,2n+1.
\end{equation}
\end{subequations}

We make use of the following proposition for convergence results further below.
\begin{proposition}\label{prop:convergence}
Let $n$ denote a given degree and let $\omega\in(0,(n+1)\pi)$ be fixed.
Let $\{r_j\}_{j\in\mathbb{N}}$ denote a sequence of unitary $(n,n)$-rational approximants for which the uniform error converges to the uniform error of the unitary best approximant $\widetilde{r}$, i.e.,
\begin{equation}\label{eq:rjconvcond}
\|r_j-\exp(\omega\cdot)\| \to \|\widetilde{r}-\exp(\omega\cdot)\|,~~~\text{for}~~j\to \infty.
\end{equation}
Then, $r_j$ converges to the unitary best approximant, i.e.,
\begin{equation}\label{eq:rjconvresult}
\|r_j - \widetilde{r}\|\to 0,~~~\text{for}~~j\to \infty.
\end{equation}
\end{proposition}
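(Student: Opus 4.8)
The plan is to argue by contradiction, combining a compactness argument with the uniqueness of the unitary best approximant. Suppose \eqref{eq:rjconvresult} fails. Then there is an $\varepsilon>0$ and a subsequence, which I relabel again as $\{r_j\}$, such that $\|r_j-\widetilde{r}\|\geq\varepsilon$ for all $j$. The goal is to show that this subsequence must nevertheless possess a further subsequence converging to $\widetilde{r}$, yielding a contradiction. The two ingredients are a compactness property of $\mathcal{U}_n$ that produces a limit $r_\ast$ of a subsequence of $\{r_j\}$, and the fact that any such limit is forced to be a unitary best approximant and hence, by uniqueness, equals $\widetilde{r}$.

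For the compactness step I would exploit that every $r_j\in\mathcal{U}_n$ is unitary, so that $|r_j(\mathrm{i}x)|=1$ for all $x\in\mathbb{R}$. In particular the family $\{r_j\}$ is uniformly bounded on $\mathrm{i}[-1,1]$ and has no poles on the imaginary axis; writing $r_j=p_j/q_j$ as a Blaschke-type product, the poles are reflections of the zeros across the imaginary axis and therefore stay off $\mathrm{i}\mathbb{R}$. Parametrizing $r_j$ by its zeros together with a unimodular leading constant, and compactifying the parameter space by admitting zeros at infinity (which corresponds to the degenerate case of degree $<n$), one extracts a subsequence whose parameters converge. The corresponding rational functions then converge uniformly on $\mathrm{i}[-1,1]$ to a limit $r_\ast$, which is again a unitary $(n,n)$-rational function, i.e.\ $r_\ast\in\mathcal{U}_n$. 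I expect this step to be the main obstacle: one must ensure that no pole migrates onto $\mathrm{i}[-1,1]$ (so that the uniform limit exists on the relevant interval) and that the limit remains genuinely unitary and of degree $\leq n$, while carefully handling the possible degeneration of the limit.

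Once $r_\ast$ is obtained, the conclusion is routine. Uniform convergence $r_j\to r_\ast$ on $\mathrm{i}[-1,1]$ together with the reverse triangle inequality shows that $r\mapsto\|r-\exp(\omega\cdot)\|$ is continuous with respect to this convergence, so $\|r_\ast-\exp(\omega\cdot)\|=\lim_j\|r_j-\exp(\omega\cdot)\|$; by the hypothesis \eqref{eq:rjconvcond} this limit equals $\|\widetilde{r}-\exp(\omega\cdot)\|$, so $r_\ast$ attains the minimum in \eqref{eq:uniformerror} and is a unitary best approximant. Since $\omega\in(0,(n+1)\pi)$, the unitary best approximant is unique, whence $r_\ast=\widetilde{r}$. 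But uniform convergence gives $\|r_j-\widetilde{r}\|=\|r_j-r_\ast\|\to 0$ along the extracted subsequence, contradicting $\|r_j-\widetilde{r}\|\geq\varepsilon$. This contradiction establishes \eqref{eq:rjconvresult}.
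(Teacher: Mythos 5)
Your overall strategy---contradiction, compactness in $\mathcal{U}_n$, identification of the limit as $\widetilde{r}$ by uniqueness---is exactly the paper's. The paper realizes the compactness step differently (it normalizes the numerator and denominator polynomials, $\|p_j\|=\|q_j\|=1$, and extracts convergent subsequences of coefficients, rather than parametrizing by zeros of a Blaschke-type product), but that difference is cosmetic. The substantive issue is the one you yourself flag and then do not resolve: you cannot establish uniform convergence of $r_j\to r_\ast$ on $\mathrm{i}[-1,1]$ \emph{before} identifying the limit. If a zero--pole pair of $r_j$ coalesces onto a point $\mathrm{i}x_0\in\mathrm{i}[-1,1]$ (equivalently, in the paper's setup, the limiting denominator $\widehat{q}$ has a zero on $\mathrm{i}[-1,1]$), the limit function degenerates to lower degree and the convergence fails to be uniform near $\mathrm{i}x_0$: the factor $(z-z_k)/(z+\overline{z_k})$ with $z_k\to\mathrm{i}x_0$ tends to $1$ away from $\mathrm{i}x_0$ but equals $-1$ at the midpoint of the zero and the pole. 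Your ``routine'' second step (continuity of $r\mapsto\|r-\exp(\omega\cdot)\|$ and hence optimality of $r_\ast$) is invoked under a uniform-convergence hypothesis you have not earned.

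The paper closes this gap by reversing the order of the two steps. It first uses only \emph{pointwise} convergence at the points $x\in[-1,1]$ where the limiting denominator does not vanish, which together with continuity of $\widehat{r}(\mathrm{i}x)-\mathrm{e}^{\mathrm{i}\omega x}$ suffices to conclude $\|\widehat{r}-\exp(\omega\cdot)\|\leq\|\widetilde{r}-\exp(\omega\cdot)\|$ and hence, by uniqueness, $\widehat{r}\equiv\widetilde{r}$. Only then does it invoke the non-degeneracy of $\widetilde{r}$ (Theorem~5.1 of~\cite{JS23u}): since the limit has exact degree $n$, the approximants $r_{j_{k_\ell}}$ are eventually non-degenerate, their denominators have no zeros on the imaginary axis (Proposition~2.2 of~\cite{JS23u}), and the convergence upgrades to uniform convergence on $\mathrm{i}[-1,1]$, producing the contradiction. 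Your argument can be repaired in exactly this way, but as written the pole-migration scenario is acknowledged rather than excluded, so the proof is incomplete at its critical step.
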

\begin{proof}
Assuming~\eqref{eq:rjconvresult} does not hold true, then there exists a subsequence $\{r_{j_k}\}_{k\in\mathbb{N}}$ and $\alpha>0$ with
\begin{equation}\label{eq:rjconvcontalpha}
\|r_{j_k} - \widetilde{r}\|>\alpha,~~~\text{for}~~k\to\infty,
\end{equation}
We recall some arguments of~\cite[Proposition~3.1]{JS23u} to show convergence of a subsequence of $\{r_{j_k}\}_{k\in\mathbb{N}}$ to some $\widehat{r}\in\mathcal{U}_n$ in the following. The rational functions $r_{j_k}$ satisfy $r_{j_k}=p_{j_k}/q_{j_k}$ where $p_{j_k}$ and $q_{j_k}$ denote polynomials of degree $\leq n$ for $k\in\mathbb{N}$. Since $r_{j_k}$ is unitary we have $|p_{j_k}(\mathrm{i} x)|=|q_{j_k}(\mathrm{i} x)|$ for $x\in[-1,1]$ and we may assume that $p_{j_k}$ and $q_{j_k}$ are normalized, i.e., $\|p_{j_k}\|=\|q_{j_k}\|=1$. Since $p_{j_k}$ and $q_{j_k}$ are bounded in the set of polynomials, there exist convergent sub-sequences $p_{j_{k_\ell}}\to \widehat{p}$ and $q_{j_{k_\ell}}\to \widehat{q}$ for $\ell\to \infty$ for some polynomials $\widehat{p}$ and $\widehat{q}$ of degree $\leq n$. Define the $(n,n)$-rational function $\widehat{r}$ as $\widehat{r} = \widehat{p}/\widehat{q}$. Properties of $p_{j_{k_\ell}}$ and $q_{j_{k_\ell}}$ imply that $\widehat{r}$ is unitary, i.e., $\widehat{r}\in\mathcal{U}_n$. For $x\in[-1,1]$ with $\widehat{q}(\mathrm{i}x)\neq 0$ we get
\begin{equation}\label{eq:rjconvcont2}
\left|r_{j_{k_\ell}}(\mathrm{i} x) - \widehat{r}(\mathrm{i} x)\right|
=\left|p_{j_{k_\ell}}(\mathrm{i} x)/q_{j_{k_\ell}}(\mathrm{i} x)  - \widehat{p}(\mathrm{i} x)/\widehat{q}(\mathrm{i} x)\right| \to 0,~~~\text{for}~~\ell\to\infty.
\end{equation}
Moreover, for points $x\in[-1,1]$ for which this limit holds true we note
\begin{subequations}\label{eq:rjconvcont3}
\begin{equation}
|\widehat{r}(\mathrm{i} x)-\mathrm{e}^{\mathrm{i} \omega x}|
= \lim_{\ell\to\infty} \left|r_{j_{k_\ell}}(\mathrm{i} x)-\mathrm{e}^{\mathrm{i} \omega x}\right|.
\end{equation}
Taking the maximum over $x\in[-1,1]$ for the absolute value therein and making use of~\eqref{eq:rjconvcond} we observe
\begin{equation}
\lim_{\ell\to\infty} \left|r_{j_{k_\ell}}(\mathrm{i} x)-\mathrm{e}^{\mathrm{i} \omega x}\right|  \leq 
\lim_{\ell\to\infty} \|r_{j_{k_\ell}} - \exp(\omega \cdot)\|
= \|\widetilde{r}-\exp(\omega\cdot)\|.
\end{equation}
\end{subequations}
Due to continuity arguments the inequalities in~\eqref{eq:rjconvcont3} imply
\begin{equation*}
\|\widehat{r}-\exp(\omega\cdot)\| \leq \|\widetilde{r}-\exp(\omega\cdot)\|,
\end{equation*}
and since $\widetilde{r}$ is the unique rational function in $\mathcal{U}_n$ which minimizes this error we arrive at the identity $\widehat{r}\equiv\widetilde{r}$. The unitary best approximant is non-degenerate~\cite[Theorem~5.1]{JS23u} which particularly implies that $r_{j_{k_\ell}}$ is non-degenerate as well for sufficiently large $\ell$. Consequently, the denominator $q_{j_{k_\ell}}$ has no zeros on the imaginary axis for sufficiently large $\ell$ due to unitarity properties~\cite[Proposition~2.2]{JS23u}. Thus, the convergence~\eqref{eq:rjconvcont2} holds true uniformly for $x\in[-1,1]$, i.e.,
\begin{equation*}
\|r_{j_{k_\ell}} - \widetilde{r}\| \to 0.
\end{equation*}
This is in contradiction to~\eqref{eq:rjconvcontalpha} which proves our claim.
\end{proof}

\subsection{Alternating points}\label{subsec:alternating}
Computing a rational approximant that exactly satisfies the equioscillatory property~\eqref{eq:phaseerrequioscillates} is not practical in computer arithmetic. In the present subsection we provide some results for the case of a non-uniform alternating phase error.

The following auxiliary result is related to \cite[Proposition~4.2]{JS23u}.
\begin{proposition}\label{prop:aprxandphaseerr}
Let $r_1,r_2\in\mathcal{U}_n$ with $r_1(\mathrm{i} x)=\mathrm{e}^{\mathrm{i} g_1(x)}$ and $r_2(\mathrm{i} x)=\mathrm{e}^{\mathrm{i} g_2(x)}$ and assume $\|r_j-\exp(\omega\cdot)\|<2$ for $j\in\{1,2\}$.
Let $x_1,x_2\in[-1,1]$ be given points, then
\begin{subequations}
\begin{equation}\label{eq:r1x1r2x2le}
|r_1(\mathrm{i} x_1)-\mathrm{e}^{\mathrm{i} \omega x_1}| < |r_2(\mathrm{i} x_2)-\mathrm{e}^{\mathrm{i} \omega x_2}|,
\end{equation}
if and only if
\begin{equation}\label{eq:g1x1g2x2le}
|g_1(x_1)-\omega x_1| < |g_2(x_2)-\omega x_2|.
\end{equation}
\end{subequations}
In particular, the approximation error of a unitary rational function attains local maxima at points of local extrema of its phase error.
\end{proposition}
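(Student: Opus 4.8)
The plan is to reduce both the approximation error and the phase error to a single explicitly monotone relationship. The starting point is the elementary identity obtained by factoring $\mathrm{e}^{\mathrm{i}\omega x}$ out of the approximation error: for a unitary $r$ with $r(\mathrm{i} x)=\mathrm{e}^{\mathrm{i} g(x)}$,
\begin{equation*}
|r(\mathrm{i} x)-\mathrm{e}^{\mathrm{i}\omega x}| = |\mathrm{e}^{\mathrm{i} g(x)}-\mathrm{e}^{\mathrm{i}\omega x}| = |\mathrm{e}^{\mathrm{i}(g(x)-\omega x)}-1| = 2\left|\sin\left(\tfrac{g(x)-\omega x}{2}\right)\right|,
\end{equation*}
using $|\mathrm{e}^{\mathrm{i}\theta}-1|=2|\sin(\theta/2)|$. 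Thus the approximation error depends on $x$ only through the phase error $g(x)-\omega x$, via the fixed function $\phi(t):=2|\sin(t/2)|$.

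Next I would invoke the hypothesis. Because $\|r_j-\exp(\omega\cdot)\|<2$ for $j\in\{1,2\}$, the phase-function property recalled in~\eqref{eq:phaseerrdef} guarantees $|g_j(x)-\omega x|<\pi$ for all $x\in[-1,1]$. On the interval $[0,\pi)$ the map $t\mapsto 2\sin(t/2)$ is strictly increasing with value $0$ at $t=0$; since $\phi$ is even, $\phi$ is a strictly increasing function of $|t|$ on that range. Hence, for each $j$,
\begin{equation*}
|r_j(\mathrm{i} x)-\mathrm{e}^{\mathrm{i}\omega x}| = \phi\bigl(|g_j(x)-\omega x|\bigr),
\end{equation*}
i.e.\ the approximation error is a strictly increasing function of the absolute phase error, the very same function $\phi$ for both $r_1$ and $r_2$.

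The biconditional is then immediate. Applying the strictly increasing $\phi$ to the two absolute phase errors $|g_1(x_1)-\omega x_1|$ and $|g_2(x_2)-\omega x_2|$, both lying in $[0,\pi)$, preserves and reflects strict inequality, so~\eqref{eq:r1x1r2x2le} holds precisely when~\eqref{eq:g1x1g2x2le} does. Neither direction requires more than monotonicity of $\phi$, which is why I expect no genuine obstacle here; the only point demanding care is confirming that the phase errors remain in $(-\pi,\pi)$, since $\phi$ fails to be monotone in $|t|$ once $|t|$ exceeds $\pi$ --- and this is exactly what the assumption $\|r_j-\exp(\omega\cdot)\|<2$ secures.

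For the concluding assertion I would specialize to $r_1=r_2=r$ and compare a fixed point with nearby points. Since the approximation error equals $\phi$ applied to the absolute phase error and $\phi$ is strictly increasing, local maxima of $x\mapsto|r(\mathrm{i} x)-\mathrm{e}^{\mathrm{i}\omega x}|$ coincide with local maxima of $x\mapsto|g(x)-\omega x|$. At a local extremum of the signed phase error $g-\omega x$ at which the phase error is bounded away from $0$, the absolute phase error attains a local maximum, whence so does the approximation error; in the alternating setting relevant to the subsequent results the phase-error extrema carry alternating signs of maximal magnitude, so this correspondence applies at each of them.
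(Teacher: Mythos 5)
Your proposal is correct and follows essentially the same route as the paper: both reduce the approximation error to $2\sin(|g(x)-\omega x|/2)$ via the identity $|\mathrm{e}^{\mathrm{i}\theta}-\mathrm{e}^{\mathrm{i}\varphi}|=2|\sin((\theta-\varphi)/2)|$, use the hypothesis $\|r_j-\exp(\omega\cdot)\|<2$ to confine the phase error to $(-\pi,\pi)$, and conclude by strict monotonicity of the sine on the resulting range. The only cosmetic difference is that you package the monotone dependence as a single function $\phi(t)=2|\sin(t/2)|$, which changes nothing of substance.
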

\begin{proof}
For $r_j(\mathrm{i} x)=\mathrm{e}^{\mathrm{i} g_j(x)}$ with $j\in\{1,2\}$ we note
\begin{subequations}\label{eq:Prop42xinproof}
\begin{equation}
|\mathrm{e}^{\mathrm{i} g_j(x)} - \mathrm{e}^{\mathrm{i} \omega x}|  =  |\mathrm{e}^{\mathrm{i} (g_j(x) - \omega x)/2} - \mathrm{e}^{-\mathrm{i} (g_j(x) - \omega x)/2} |
= 2  | \sin((g_j(x) - \omega x)/2 ) |.
\end{equation}
Following \cite[Proposition~4.2]{JS23u} the assumption $\|r_j-\exp(\omega\cdot)\|<2$ implies $|g_j(x) - \omega x|< \pi$ for $x\in[-1,1]$, and thus  
\begin{equation}
| \sin((g_j(x) - \omega x)/2 ) | = \sin(|g_j(x) - \omega x|/2 ).
\end{equation}
\end{subequations}
In particular, for the points $x_1,x_2\in[-1,1]$ the identities in~\eqref{eq:Prop42xinproof} imply
\begin{equation*}
|r_j(\mathrm{i} x_j) - \mathrm{e}^{\mathrm{i} \omega x_j}|
= 2  \sin(|g_j(x_j) - \omega x_j|/2 ),~~~j\in\{1,2\},
\end{equation*}
Making use of this identity, we observe that~\eqref{eq:r1x1r2x2le} is equivalent to
\begin{equation}\label{eq:r1x1r2x2le2}
\sin(|g_1(x_1) - \omega x_1|/2 ) < \sin(|g_2(x_2) - \omega x_2|/2 ).
\end{equation}
Since $|g_j(x_j) - \omega x_j|< \pi$ and the sine function is strictly monotonically increasing on $[0,\pi/2)$, we conclude that the inequality~\eqref{eq:g1x1g2x2le} holds true if and only if~\eqref{eq:r1x1r2x2le2}, respectively~\eqref{eq:r1x1r2x2le}, holds true which completes the proof.
\end{proof}

We refer to the points $\tau_1<\ldots<\tau_{n+2}$ as {\em alternating points} of the phase error if its sign is alternating at these points, i.e.,
\begin{equation}\label{eq:phaseerrnonunfaltern}
g(\tau_j) - \omega \tau_j
= (-1)^{j+\iota} |g(\tau_j) - \omega \tau_j|,\quad j=1,\ldots,2n+2,\quad \iota\in\{0,1\}.
\end{equation}

%A variant of lower bounds on the minmax error sometimes referred to de la Vall{\'e}e-Poussin.
The following proposition is closely related to \cite[Proposition~5.4]{JS23u}.
\begin{proposition}
\label{prop:ValleePoussin}
Let $n$ denote a given degree and let $\omega\in(0,(n+1)\pi)$ be fixed.
Assume that $r\in\mathcal{U}_n$ has a non-maximal approximation error s.t.\ the phase function with $r(\mathrm{i} x)=\mathrm{e}^{\mathrm{i} g(x)}\approx \mathrm{e}^{\mathrm{i} \omega x}$ is well defined, and assume that the phase error of $r$ has $2n+2$ alternating points $\tau_1<\ldots<\tau_{2n+2}\in[-1,1]$ as in~\eqref{eq:phaseerrnonunfaltern}. Then
\begin{equation}\label{eq:phaseerrVP}
\min_{j=1,\ldots,2n+2} |g(\tau_j) - \omega \tau_j|
\leq  \max_{x\in[-1,1]} |\widetilde{g}(x) - \omega x|.
\end{equation}
Moreover, the approximation error satisfies
\begin{equation}\label{eq:approxerrVP}
\min_{j=1,\ldots,2n+2} |r(\mathrm{i} \tau_j) - \mathrm{e}^{\mathrm{i} \omega \tau_j}|
\leq   \|\widetilde{r} - \exp(\omega \cdot)\|.
\end{equation}
\end{proposition}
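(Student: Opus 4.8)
The plan is to prove the phase-error bound \eqref{eq:phaseerrVP} by a de la Vallée Poussin type argument, and then to transfer it to the approximation-error bound \eqref{eq:approxerrVP} using Proposition~\ref{prop:aprxandphaseerr}. I would argue by contradiction: suppose \eqref{eq:phaseerrVP} fails, i.e.,
\begin{equation*}
\max_{x\in[-1,1]} |\widetilde{g}(x) - \omega x| < \min_{j=1,\ldots,2n+2} |g(\tau_j) - \omega \tau_j|.
\end{equation*}
Both phase functions $g$ and $\widetilde{g}$ are well defined and continuous since $r$ and $\widetilde{r}$ have non-maximal approximation error (recall $\|\widetilde{r}-\exp(\omega\cdot)\|<2$ for $\omega\in(0,(n+1)\pi)$), and both phase errors take values in $(-\pi,\pi)$ on $[-1,1]$.

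First I would track the sign of the continuous function $g(x)-\widetilde{g}(x)$ at the alternating points. At each $\tau_j$ the contradiction hypothesis gives $|\widetilde{g}(\tau_j)-\omega\tau_j|\leq\max_x|\widetilde{g}(x)-\omega x|<\min_j|g(\tau_j)-\omega\tau_j|\leq|g(\tau_j)-\omega\tau_j|$, so the dominant term in $g(\tau_j)-\widetilde{g}(\tau_j)=(g(\tau_j)-\omega\tau_j)-(\widetilde{g}(\tau_j)-\omega\tau_j)$ is the first one, and hence the sign of $g(\tau_j)-\widetilde{g}(\tau_j)$ equals the sign of the phase error $g(\tau_j)-\omega\tau_j$, which by \eqref{eq:phaseerrnonunfaltern} is $(-1)^{j+\iota}$. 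Consequently $g-\widetilde{g}$ changes sign on each of the $2n+1$ disjoint intervals $(\tau_j,\tau_{j+1})$, and by the intermediate value theorem it has at least $2n+1$ distinct zeros in $(-1,1)$.

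The key step, which I expect to be the main obstacle, is to turn these $2n+1$ zeros into a contradiction via a degree count. At a zero $x_\ast$ of $g-\widetilde{g}$ we have $g(x_\ast)=\widetilde{g}(x_\ast)$; since both phase errors lie in $(-\pi,\pi)$, the difference $g(x_\ast)-\widetilde{g}(x_\ast)$ lies in $(-2\pi,2\pi)$, so $g(x_\ast)=\widetilde{g}(x_\ast)$ is genuinely equivalent to $r(\mathrm{i} x_\ast)=\widetilde{r}(\mathrm{i} x_\ast)$ (and not merely an equality of exponentials up to a multiple of $2\pi$). Writing $r=p/q$ and $\widetilde{r}=\widetilde{p}/\widetilde{q}$ with $p,q,\widetilde{p},\widetilde{q}$ of degree $\leq n$, each such $x_\ast$ yields a root $\mathrm{i} x_\ast$ of the polynomial $s:=p\widetilde{q}-\widetilde{p}q$, which has degree $\leq 2n$. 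If $s\equiv 0$ then $r\equiv\widetilde{r}$ and hence $g\equiv\widetilde{g}$, which already contradicts the hypothesis since then $\min_j|g(\tau_j)-\omega\tau_j|=\min_j|\widetilde{g}(\tau_j)-\omega\tau_j|\leq\max_x|\widetilde{g}(x)-\omega x|$. Otherwise $s\not\equiv 0$ has at most $2n$ roots, contradicting the $2n+1$ distinct roots $\mathrm{i} x_\ast$. Either way we reach a contradiction, proving \eqref{eq:phaseerrVP}; the delicate points are that the zeros are distinct (guaranteed by the disjoint intervals) and that $g=\widetilde{g}$ is exactly equivalent to $r(\mathrm{i}x)=\widetilde{r}(\mathrm{i}x)$.

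Finally, to obtain \eqref{eq:approxerrVP} I would invoke the identity from the proof of Proposition~\ref{prop:aprxandphaseerr}, namely $|r(\mathrm{i} x)-\mathrm{e}^{\mathrm{i}\omega x}|=2\sin(|g(x)-\omega x|/2)$ for any unitary $r$ with non-maximal error, and likewise for $\widetilde{r}$. Since all absolute phase errors lie in $[0,\pi)$, the map $t\mapsto 2\sin(t/2)$ is strictly increasing on the relevant range, so
\begin{equation*}
\min_{j}|r(\mathrm{i}\tau_j)-\mathrm{e}^{\mathrm{i}\omega\tau_j}| = 2\sin\Big(\tfrac12\min_j|g(\tau_j)-\omega\tau_j|\Big),\qquad \|\widetilde{r}-\exp(\omega\cdot)\| = 2\sin\Big(\tfrac12\max_x|\widetilde{g}(x)-\omega x|\Big).
\end{equation*}
Applying this monotone map to \eqref{eq:phaseerrVP} then yields \eqref{eq:approxerrVP}, which completes the proof.
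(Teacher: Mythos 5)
Your proof is correct and follows essentially the same de la Vallée Poussin argument as the paper: negate the phase-error bound, use the alternation at $\tau_1,\ldots,\tau_{2n+2}$ to produce $2n+1$ sign changes of the continuous function $g-\widetilde{g}$, conclude $r\equiv\widetilde{r}$ and hence a contradiction, and then transfer to the approximation error via the strict monotonicity of $t\mapsto 2\sin(t/2)$ on $[0,\pi)$. The only differences are that where the paper invokes \cite[Proposition~5.3]{JS23u} to pass from $2n+1$ agreement points of the phase functions to $r\equiv\widetilde{r}$, you prove that step inline by a degree count on $p\widetilde{q}-\widetilde{p}q$, and you apply the sine identity directly rather than routing the second claim through Proposition~\ref{prop:aprxandphaseerr}; both substitutions are sound.
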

\begin{proof}
We first prove~\eqref{eq:phaseerrVP} by contradiction. Assuming the opposite of~\eqref{eq:phaseerrVP} we note
\begin{equation*}%\label{eq:phaseerrVPcont}
|g(\tau_j) - \omega \tau_j|
> \max_{x\in[-1,1]} |\widetilde{g}(x) - \omega x|,~~~j=1,\ldots,2n+2.
%\geq |\widetilde{g}(\tau_j) - \omega \tau_j|
\end{equation*}
Since the maximum therein is larger than the error of $\widetilde{g}$ at $\tau_1,\ldots,\tau_{2n+2}$, this implies 
\begin{equation}\label{eq:phaseerrVPcont}
|g(\tau_j) - \omega \tau_j|
> |\widetilde{g}(\tau_j) - \omega \tau_j|,~~~j=1,\ldots,2n+2.
%\geq |\widetilde{g}(\tau_j) - \omega \tau_j|
\end{equation}
To simplify the proof we assume $\iota=0$ in the alternation property~\eqref{eq:phaseerrnonunfaltern} of the phase error of $r$. Combining~\eqref{eq:phaseerrnonunfaltern} with~\eqref{eq:phaseerrVPcont} we observe
\begin{align*}
&g(\tau_j) - \omega \tau_j < \widetilde{g}(\tau_j) - \omega \tau_j,~~~j=1,3,\ldots,~~\text{and}\\
&g(\tau_j) - \omega \tau_j > \widetilde{g}(\tau_j) - \omega \tau_j,~~~j=2,4,\ldots.
\end{align*}
Thus, there exists points $t_1,\ldots,t_{2n+1}$ with $t_j\in(\tau_j,\tau_{j+1})$ and
\begin{equation*}
g(t_j) = \widetilde{g}(t_j),~~~j=1,\ldots,2n+1,
\end{equation*}
and \cite[Proposition~5.3]{JS23u} implies $r=\widetilde{r}$, and consequently $g=\widetilde{g}$. 
Similar arguments hold true for the case $\iota=1$.
Since $g=\widetilde{g}$ is contradictory to~\eqref{eq:phaseerrVPcont}, this proves~\eqref{eq:phaseerrVP}.

We proceed to prove~\eqref{eq:approxerrVP}. Let $k\in\{1,\ldots,2n+2\}$ denote the index with
\begin{subequations}\label{eq:inproofVPids}
\begin{equation}
\min_{j=1,\ldots,2n+2} |g(\tau_j) - \omega \tau_j| = |g(\tau_k) - \omega \tau_k|
\end{equation}
Proposition~\ref{prop:aprxandphaseerr} with $r_1=r_2=r$ implies that the minimum of the approximation error at $\tau_1,\ldots,\tau_{2n+2}$ is attained at the same point as the minimum over the phase error. Thus,
\begin{equation}
\min_{j=1,\ldots,2n+2} |r(\mathrm{i} \tau_j) - \mathrm{e}^{\mathrm{i} \omega \tau_j}| = |r(\mathrm{i} \tau_k) - \mathrm{e}^{\mathrm{i} \omega \tau_k}|.
\end{equation}
In a similar manner, this proposition implies that $\widetilde{r}$ attains its maximal approximation and phase errors at the same point $\widehat{x}\in[-1,1]$, i.e.,
\begin{equation}
\max_{x\in[-1,1]} |\widetilde{g}(x) - \omega x| = |\widetilde{g}(\widehat{x}) - \omega \widehat{x}|,~~~\text{and}~~
 \|\widetilde{r} - \exp(\omega \cdot)\| = |\widetilde{r}(\mathrm{i} \widehat{x}) - \mathrm{e}^{\mathrm{i} \omega \widehat{x}}|.
\end{equation}
\end{subequations}
Thus,~\eqref{eq:phaseerrVP} corresponds to
\begin{equation}\label{eq:phaseerrVP2}
|g(\tau_k) - \omega \tau_k|
\leq |\widetilde{g}(\widehat{x}) - \omega \widehat{x}|,
\end{equation}
and~\eqref{eq:approxerrVP} corresponds to
\begin{equation}\label{eq:approxerrVP2}
|r(\mathrm{i} \tau_k) - \mathrm{e}^{\mathrm{i} \omega \tau_k}|
\leq |\widetilde{r}(\mathrm{i} \widehat{x}) - \mathrm{e}^{\mathrm{i} \omega \widehat{x}}|.
\end{equation}
Applying Proposition~\ref{prop:aprxandphaseerr} for $r$, $g$, $\tau_k$ and $\widetilde{r}$, $\widetilde{g}$, $\widehat{x}$ shows that~\eqref{eq:phaseerrVP2} holds true if and only if~\eqref{eq:approxerrVP2} holds true. This equality carries over to~\eqref{eq:phaseerrVP} and~\eqref{eq:approxerrVP} due to the identities in~\eqref{eq:inproofVPids}  which completes the proof.
\end{proof}

\subsection{Rational interpolants and the error in uniformity}\label{subsec:errsandwich}

In the present subsection we consider the setting of rational interpolants as used for algorithms of type~\ref{item:algbrib} (discussed in more detail in Subsection~\ref{subsec:brib} below). In particular, the notation $x_1,\ldots,x_{2n+1}$ and $\eta_1,\ldots,\eta_{2n+2}$ for the interpolation nodes~\eqref{eq:urbinterpolate} and equioscillation points~\eqref{eq:phaseerrequioscillates}, respectively, of the unitary best approximant $\widetilde{r}$ is re-used for rational interpolants in a more general setting as following.

We consider distinct nodes $x_1,\ldots,x_{2n+1}\in(-1,1)$ in ascending order and a $(n,n)$-rational function which interpolates $\mathrm{e}^{\mathrm{i} \omega x}$ in these nodes, i.e.,
\begin{subequations}\label{eq:nonuniforminterpsetting}
\begin{equation}\label{eq:nnsinterpolate}
r(\mathrm{i} x_j) = \mathrm{e}^{\mathrm{i} \omega x_j},~~~j=1,\ldots,2n+1.
\end{equation}
Moreover, let $\eta_1,\ldots,\eta_{2n+2}$ denote points of intermediate maxima of the approximation error $|r(\mathrm{i} x)-\mathrm{e}^{\mathrm{i} \omega x}|$, s.t.
\begin{equation}
-1\leq \eta_1<x_1<\eta_2<\ldots<x_{2n+1}<\eta_{2n+2}\leq 1,
\end{equation}
with
\begin{equation}\label{eq:localmaxerr}
\begin{aligned}
\max_{x\in[-1,x_1)} |r(\mathrm{i} x)-\mathrm{e}^{\mathrm{i} \omega x}| &= |r(\mathrm{i} \eta_1)-\mathrm{e}^{\mathrm{i} \eta_1}|,\\
\max_{x\in(x_{j-1},x_j)} |r(\mathrm{i} x)-\mathrm{e}^{\mathrm{i} \omega x}| &= |r(\mathrm{i} \eta_j)-\mathrm{e}^{\mathrm{i} \eta_j}|,~~\text{for $j=2,\ldots,2n+1$, and}\\
\max_{x\in(x_{2n+1},1]} |r(\mathrm{i} x)-\mathrm{e}^{\mathrm{i} \omega x}| &=|r(\mathrm{i} \eta_{2n+2})-\mathrm{e}^{\mathrm{i} \eta_{2n+2}}|.
\end{aligned}
\end{equation}
\end{subequations}
We use the notation $\varepsilon_j$ for the maxima attained at $\eta_j$ in the following, i.e.,
\begin{equation}\label{eq:epsj}
\varepsilon_j = |r(\mathrm{i} \eta_j)-\mathrm{e}^{\mathrm{i}\omega \eta_j}|,~~~j=1,\ldots,2n+2.
\end{equation}
Making use of this notation, we note that the uniform error of $r$ corresponds to
\begin{equation}\label{eq:errnormismaxetaj}
\|r-\exp(\omega \cdot)\| 
=\max_{k=1,\ldots,2n+2} \varepsilon_k.
\end{equation}

In contrast to polynomial interpolation, solutions to the rational interpolation problem~\eqref{eq:nnsinterpolate} might not exist, cf.~\cite[Section~2]{Be70}, \cite{MW60b, Gu90} and others. However, for the present work we assume existence of rational interpolants for the underlying nodes, which is certainly given if these nodes correspond to, or are sufficiently close to, the interpolation nodes of the unitary best approximant~\eqref{eq:urbinterpolate}.
In particular, if the interpolation nodes $x_1,\ldots,x_{2n+1}$ in~\eqref{eq:nnsinterpolate} correspond to the interpolation nodes of the unitary best approximant~\eqref{eq:urbinterpolate}, then $r\equiv \widetilde{r}$ since the rational interpolant is unique in the non-degenerate case which holds true for $\widetilde{r}$. Consequently, in this case the points $\eta_1,\ldots,\eta_{2n+2}$ in~\eqref{eq:localmaxerr} correspond to the equioscillation points of the unitary best approximant~\eqref{eq:phaseerrequioscillates}.

Following~\cite[Proposition~2.1]{JS24} (or~\cite[Proposition~2.4]{JS23u} covering a more general setting) 
the rational interpolant $r$ satisfying~\eqref{eq:nnsinterpolate} is unitary, i.e., $r\in\mathcal{U}_n$. In line with~\cite[Section~6]{JS23u} we refer to a unitary rational function $r$ as symmetric if
\begin{equation}\label{eq:rsym}
r(-z)^{-1} = r(z),~~~z\in\mathbb{C}.
\end{equation}
In the following auxiliary result, we show symmetry of rational interpolants in a certain setting.
\begin{proposition}\label{prop:symmetry}
Let the nodes $x_1,\ldots,x_{2n+1}$ be mirrored around zero, i.e.,
\begin{equation}\label{eq:xjmirrored}
x_{n+1}=0,~~~\text{and}~~x_j = -x_{2n+2-j},~~~j=1,\ldots,n,
\end{equation}
and let $r$ be a non-degenerate $(n,n)$-rational function which satisfies the interpolation problem~\eqref{eq:nnsinterpolate}. Then, $r$ is symmetric~\eqref{eq:rsym}.
\end{proposition}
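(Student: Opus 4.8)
The plan is to exploit uniqueness of the rational interpolant through a symmetrization argument. I would introduce the auxiliary function $s(z) := r(-z)^{-1}$ and show that $s$ satisfies \emph{exactly} the same interpolation problem \eqref{eq:nnsinterpolate} as $r$; uniqueness of the non-degenerate interpolant then forces $s \equiv r$, which is precisely the symmetry relation \eqref{eq:rsym}.

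First I would check that $s$ is again a non-degenerate $(n,n)$-rational function in $\mathcal{U}_n$. Writing $r = p/q$ with $\deg p,\deg q \le n$ yields $s(z) = q(-z)/p(-z)$, so numerator and denominator again have degree $\le n$, and they share a common factor only if $p,q$ do; since $r$ is non-degenerate, so is $s$. Unitarity of $s$ follows from that of $r$: for real $x$ one has $|s(\mathrm{i} x)| = |r(-\mathrm{i} x)|^{-1} = |r(\mathrm{i}(-x))|^{-1} = 1$. I would also record that, because $r$ is non-degenerate and unitary, its denominator (and hence its numerator) has no zeros on the imaginary axis by \cite[Proposition~2.2]{JS23u}, so that $s$ is genuinely defined at each node $\mathrm{i} x_j$.

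The second step verifies that $s$ interpolates the same data. The hypothesis \eqref{eq:xjmirrored} gives $-x_j = x_{2n+2-j}$ for every $j = 1,\dots,2n+1$ (including $-x_{n+1} = 0 = x_{n+1}$). Using the interpolation property of $r$ together with $x_{2n+2-j} = -x_j$, I would then compute
\begin{equation*}
s(\mathrm{i} x_j) = r(\mathrm{i}(-x_j))^{-1} = r(\mathrm{i} x_{2n+2-j})^{-1} = \mathrm{e}^{-\mathrm{i}\omega x_{2n+2-j}} = \mathrm{e}^{\mathrm{i}\omega x_j},\quad j = 1,\dots,2n+1.
\end{equation*}
The essential structural fact driving this is that the data function satisfies $\mathrm{e}^{\mathrm{i}\omega(-x)} = (\mathrm{e}^{\mathrm{i}\omega x})^{-1}$, which pairs with the node symmetry to reproduce the interpolation conditions. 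Hence $s$ interpolates $\mathrm{e}^{\mathrm{i}\omega x}$ at $x_1,\dots,x_{2n+1}$ as well.

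Finally I would invoke uniqueness. With $r = p/q$ and $s = \hat p/\hat q$ both genuine $(n,n)$-rational interpolants at the $2n+1$ distinct nodes, the polynomial $p\hat q - \hat p q$ has degree $\le 2n$ and vanishes at all $2n+1$ of them, hence vanishes identically; therefore $r \equiv s$, i.e.\ $r(z) = r(-z)^{-1}$, which is \eqref{eq:rsym}. The one point demanding care is ensuring that both $r$ and $s$ are \emph{true} interpolants, with denominators non-vanishing at the nodes, so that this root-counting argument legitimately applies; this is exactly where non-degeneracy enters, via the absence of imaginary-axis poles noted in the first step. I expect this bookkeeping to be the main, though minor, obstacle, the remainder being routine.
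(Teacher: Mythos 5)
Your proposal is correct and follows essentially the same route as the paper: define the auxiliary function $r(-z)^{-1}$, verify via the node symmetry and the identity $\mathrm{e}^{-z}=(\mathrm{e}^{z})^{-1}$ that it solves the same interpolation problem, and conclude by uniqueness of the non-degenerate rational interpolant. The only difference is cosmetic—you spell out the uniqueness step with the cross-multiplication root-counting argument, where the paper simply cites Gutknecht.
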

\begin{proof}
We define $\zeta(z) = r(-z)^{-1}$, and note that $\zeta$ corresponds to a $(n,n)$-rational function. We proceed to show that $\zeta$ solves the interpolation problem~\eqref{eq:nnsinterpolate}, i.e.,
\begin{equation}\label{eq:nnsinterpolate2}
\zeta(\mathrm{i} x_j) = \mathrm{e}^{\mathrm{i} \omega x_j},~~~j=1,\ldots,2n+1.
\end{equation}
Since the nodes $x_j$ are mirrored around zero, i.e., $x_j=-x_{2n+2-j}$ for $j=1,\ldots,n$, the first $n$ conditions in~\eqref{eq:nnsinterpolate} are equivalent to
\begin{subequations}\label{eq:nnsinterpolate2x}
\begin{equation}\label{eq:nnsinterpolate20}
\zeta(-\mathrm{i} x_{2n+2-j}) = \mathrm{e}^{-\mathrm{i} \omega x_{2n+2-j}},~~~j=1,\ldots,n.
\end{equation}
Making use if the definition of $\zeta$, and the identity $\mathrm{e}^{-z}=(\mathrm{e}^{z})^{-1}$, the identity~\eqref{eq:nnsinterpolate20} is equivalent to
\begin{equation}
r(\mathrm{i} x_{2n+2-j})^{-1} = (\mathrm{e}^{\mathrm{i} \omega x_{2n+2-j}})^{-1},
\end{equation}
\end{subequations}
which holds true since $r$ satisfies the interpolation condition~\eqref{eq:nnsinterpolate}. The identities in~\eqref{eq:nnsinterpolate2x} show that~\eqref{eq:nnsinterpolate2} holds true for $j=1,\ldots,n$, and similar arguments hold true for $j=n+2,\ldots,2n+1$. For the node $x_{n+1}=0$ the interpolation condition reads $\zeta(0) = 1$, which holds true since $\zeta(0) = r(0)^{-1}$ by definition, and $r(0)=1$ due to~\eqref{eq:nnsinterpolate}.

Thus, the identities~\eqref{eq:nnsinterpolate2} hold true and $\zeta$ satisfies the same interpolation problem as $r$. However, $r$ uniquely solves this interpolation problem since we assume that $r$ is non-degenerate, cf.~\cite{Gu90}. This shows $\zeta\equiv r$, and thus, $r$ is symmetric~\eqref{eq:rsym} which proves our assertion.
\end{proof}
Since rational interpolants to $\mathrm{e}^{\mathrm{i}\omega x}$ are unitary, the symmetry property~\eqref{eq:rsym} is equivalent to
\begin{equation*}
\overline{r(-\mathrm{i} x)} = r(\mathrm{i} x),~~~x\in\mathbb{R}.
\end{equation*}
Thus, the approximation error of $r(\mathrm{i} x)\approx\mathrm{e}^{\mathrm{i} \omega x}$ is an  is an even function in $x$, i.e.,
\begin{equation}\label{eq:errormirrored}
|r(\mathrm{i} x)-\mathrm{e}^{\mathrm{i} \omega x}| = |r(-\mathrm{i} x)-\mathrm{e}^{-\mathrm{i} \omega x}|,~~~x\in\mathbb{R}.
\end{equation}
Consequently, in a symmetric setting the intermediate points of maximal error $\eta_1,\ldots,\eta_{2n+2}$~\eqref{eq:localmaxerr} are mirrored around zero, i.e., we may choose the points $\eta_j$ s.t.
\begin{subequations}\label{eq:etajmirroredx}
\begin{equation}\label{eq:etajmirrored}
\eta_j = -\eta_{2n+3-j},~~~ j=1,\ldots,n+1,
\end{equation}
Due to~\eqref{eq:errormirrored} the respective errors satisfy
\begin{equation}\label{eq:epsjmirrored}
\varepsilon_j
= \varepsilon_{2n+3-j},~~~ j=1,\ldots,n+1.
\end{equation}
\end{subequations}
%For the present work we suggest utilizing symmetry properties to reduce computational cost of the interpo\-lation-based algorithm as described in Subsection~\ref{subsec:brib}.

{\bf Error in uniformity.}
Considering the setting of~\eqref{eq:nonuniforminterpsetting} we define
the {\em error in uniformity}
$\delta$ as
\begin{equation}\label{eq:derapproxerr}
\delta = 1 - \min_{j=1,\ldots,2n+2} \varepsilon_j \big/ \max_{k=1,\ldots,2n+2} \varepsilon_k.
\end{equation}

In the following corollary we make use of $\delta$ to enclose the error of the unitary best approximant by the error of a rational interpolant. We recall that a rational interpolant as in~\eqref{eq:nonuniforminterpsetting} is unitary, and under the assumption that $\|r-\exp(\omega\cdot)\|< 2$ it satisfies the representation $r(\mathrm{i} x)=\mathrm{e}^{\mathrm{i} g(x)}$ for a phase function~$g$.
\begin{corollary}[to propositions~\ref{prop:convergence} and~\ref{prop:ValleePoussin}] \label{cor:errsandwich}
Let $r$ satisfy~\eqref{eq:nonuniforminterpsetting}. Assume $r$ has a non-maximal error, i.e., $\|r-\exp(\omega\cdot)\|< 2$ and assume the phase error of $r$ has an alternating sign at the points $\eta_1,\ldots,\eta_{2n+2}$ in line with~\eqref{eq:phaseerrnonunfaltern}, i.e.,
\begin{equation*}%\label{eq:phaseerrnonunfaltern2}
g(\eta_j) - \omega \eta_j
= (-1)^{j+\iota} |g(\eta_j) - \omega\eta_j|,\quad j=1,\ldots,2n+2,\quad \iota\in\{0,1\}.
\end{equation*}
Let the error in uniformity $\delta$ be defined as in~\eqref{eq:derapproxerr}. Then, the error of the unitary best approximant is sandwiched by
\begin{equation}\label{eq:dererrsandwich}
(1-\delta) \|r-\exp(\omega \cdot)\|  \leq  \|\widetilde{r} - \exp(\omega \cdot)\| \leq \|r-\exp(\omega \cdot)\| .
\end{equation}
In particular, the relative distance between the errors of the computed approximant and the unitary best approximant is bounded by
\begin{equation}\label{eq:errdistbound}
\frac{|\|r-\exp(\omega \cdot)\| - \|\widetilde{r}-\exp(\omega \cdot)\||}{\|r-\exp(\omega \cdot)\|}
\leq \delta .
\end{equation}
Moreover, let $\{r_j\}_{j\in\mathbb{N}}$ denote a sequence of rational interpolants s.t.\ $r_j$ satisfies the conditions above. Let $\delta_j$ refer to the error in uniformity of $r_j$ and assume $\delta_j\to 0$ for $j\to \infty$. Then, $r_j$ converges to the unitary best approximant $\widetilde{r}$, i.e.,
\begin{equation}\label{eq:rjwithdeltato0converges}
\|r_j-\widetilde{r}\|\to 0.
\end{equation}
\end{corollary}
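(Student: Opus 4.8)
The plan is to assemble the statement from its two named ingredients, taking the sandwich inequality \eqref{eq:dererrsandwich} as the core and deriving the remaining claims from it. The right-hand inequality is immediate: a rational interpolant satisfying \eqref{eq:nonuniforminterpsetting} is unitary, so $r\in\mathcal{U}_n$, and since $\widetilde r$ minimizes the uniform error over $\mathcal{U}_n$ by \eqref{eq:uniformerror}, we have $\|\widetilde r-\exp(\omega\cdot)\|\leq\|r-\exp(\omega\cdot)\|$ with no further work.

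For the left-hand inequality I would first rewrite the factor $(1-\delta)\|r-\exp(\omega\cdot)\|$ in terms of the local maxima $\varepsilon_j$: by the definition \eqref{eq:derapproxerr} of $\delta$ together with \eqref{eq:errnormismaxetaj}, one has $(1-\delta)\|r-\exp(\omega\cdot)\|=\bigl(\min_{j}\varepsilon_j\big/\max_{k}\varepsilon_k\bigr)\,\max_{k}\varepsilon_k=\min_{j=1,\ldots,2n+2}\varepsilon_j$. Hence the left inequality reads $\min_{j}\varepsilon_j\leq\|\widetilde r-\exp(\omega\cdot)\|$, which is exactly the conclusion \eqref{eq:approxerrVP} of Proposition~\ref{prop:ValleePoussin} with the alternating points taken as $\tau_j=\eta_j$. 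To invoke that proposition I must verify its hypotheses: $r$ has non-maximal error (assumed), its phase function $g$ is well defined (a consequence of $\|r-\exp(\omega\cdot)\|<2$), and the phase error alternates at the $2n+2$ points $\eta_1<\cdots<\eta_{2n+2}$---this last being precisely the alternation assumption placed on $r$ in the corollary. With the hypotheses checked, \eqref{eq:approxerrVP} supplies the lower bound and completes \eqref{eq:dererrsandwich}.

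The relative-distance bound \eqref{eq:errdistbound} then follows by elementary algebra: the upper inequality in \eqref{eq:dererrsandwich} makes the numerator nonnegative so the absolute value may be dropped, the lower inequality gives $\|r-\exp(\omega\cdot)\|-\|\widetilde r-\exp(\omega\cdot)\|\leq\delta\,\|r-\exp(\omega\cdot)\|$, and dividing by $\|r-\exp(\omega\cdot)\|>0$ (positive since $\mathrm{e}^{\mathrm{i}\omega x}$ is not rational in $x$) yields \eqref{eq:errdistbound}. For the convergence claim \eqref{eq:rjwithdeltato0converges} I would apply the sandwich to each $r_j$ to get $\|r_j-\exp(\omega\cdot)\|-\|\widetilde r-\exp(\omega\cdot)\|\leq\delta_j\,\|r_j-\exp(\omega\cdot)\|\leq 2\delta_j$, using $\|r_j-\exp(\omega\cdot)\|\leq 2$; since $\delta_j\to 0$ this forces $\|r_j-\exp(\omega\cdot)\|\to\|\widetilde r-\exp(\omega\cdot)\|$, which is hypothesis \eqref{eq:rjconvcond} of Proposition~\ref{prop:convergence}, and as each $r_j$ is unitary that proposition delivers $\|r_j-\widetilde r\|\to 0$. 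The only genuine content is the identification $(1-\delta)\|r-\exp(\omega\cdot)\|=\min_j\varepsilon_j$ and the verification of the alternation hypothesis needed for Proposition~\ref{prop:ValleePoussin}; everything else is bookkeeping, so I do not anticipate a substantive obstacle.
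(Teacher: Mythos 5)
Your proposal is correct and follows essentially the same route as the paper's proof: identifying $(1-\delta)\|r-\exp(\omega\cdot)\|$ with $\min_j\varepsilon_j$, invoking Proposition~\ref{prop:ValleePoussin} at the points $\eta_j$ for the lower bound, using unitarity of the interpolant for the upper bound, and combining \eqref{eq:errdistbound} with the bound $\|r_j-\exp(\omega\cdot)\|\leq 2$ and Proposition~\ref{prop:convergence} for the convergence claim. The only additions are minor bookkeeping (e.g., noting $\|r-\exp(\omega\cdot)\|>0$) that the paper leaves implicit.
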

\begin{proof}
Substituting the identity~\eqref{eq:errnormismaxetaj} in the definition~\eqref{eq:derapproxerr} we simplify the left-hand side in~\eqref{eq:dererrsandwich} to
\begin{equation}\label{eq:dertoerr}
(1-\delta) \|r-\exp(\omega \cdot)\|  = \min_{j=1,\ldots,2n+2} |r(\mathrm{i} \eta_j)-\mathrm{e}^{\mathrm{i} \eta_j}|
\end{equation}
Under the assumption that the corresponding phase error changes its sign at local extrema, Proposition~\ref{prop:ValleePoussin} implies
\begin{equation*}%\label{eq:applyVP}
\min_{j=1,\ldots,2n+2} \varepsilon_j \leq  \|\widetilde{r} - \exp(\omega \cdot)\|,
\end{equation*}
and together with~\eqref{eq:dertoerr} this proves the lower bound in~\eqref{eq:dererrsandwich}. Moreover, the upper bound in~\eqref{eq:dererrsandwich} holds true holds true since the rational interpolant $r$ is unitary and $\widetilde{r}$ denotes the unitary best approximation. Moreover,~\eqref{eq:errdistbound} directly follows from~\eqref{eq:dererrsandwich}.

To show the claim~\eqref{eq:rjwithdeltato0converges}, we note that the upper bound~\eqref{eq:errdistbound}, unitarity $\|r_j-\exp(\omega \cdot)\|\leq 2$, and the assumption $\delta_j\to 0$ implies
\begin{equation}
|\|r_j-\exp(\omega \cdot)\| - \|\widetilde{r}-\exp(\omega \cdot)\||
\leq 2 \delta_j \to 0. 
\end{equation}
Consequently, Proposition~\ref{prop:convergence} shows~\eqref{eq:rjwithdeltato0converges} which completes the proof.
\end{proof}

The results of Corollary~\ref{cor:errsandwich} have some relevance when computing the unitary best approximant by rational interpolation in successively corrected nodes, approach~\ref{item:algbrib}. Namely, to detect whether the computed approximant has a uniform error similar to the uniform error of the unitary best approximation and whether it converges to the unitary best approximant. For more details we refer to Subsection~\ref{subsec:brib}.
The interpolatory setting for defining the error in uniformity does not apply for approximants computed by the AAA-Lawson method, approach~\ref{item:alglawson}, in general. However, interpolation properties carry over from the unitary best approximant in case the computed approximant is sufficiently close to the unitary best approximant. Thus, the results of Corollary~\ref{cor:errsandwich} suit well to detect and evaluate convergence for approximants computed by the interpolation-based algorithm as well as the AAA-Lawson method.

\section{A priori estimates for \texorpdfstring{$\omega$}{w}}\label{sec:setting}

Success of presented algorithms to compute the unitary best approximant in computer arithmetic relies on a proper choice of the frequency $\omega$ w.r.t.\ the degree~$n$. The following two cases usually yield difficulties in this context:
\begin{enumerate}[label={(P\arabic*)}]
\item\label{item:difficultieswlarge} Frequencies $\omega$ close to $(n+1)\pi$. For $\omega\to (n+1)\pi$ the unitary best approximant approaches a degenerate case, i.e., $\widetilde{r}\equiv 1$, which potentially yields difficulties for computation.
\item\label{item:difficultieswsmall} The case $\omega\to 0$. Since the uniform approximation error of the unitary best approximant converges to zero with asymptotic order $\mathcal{O}(\omega^{2n+1})$ for $\omega \to 0$, the unitary best approximant quickly attains a uniform error below computer precision for $\omega<(n+1)\pi$.
This makes it particularly challenging to detect points of maximal errors for small $\omega$ for the interpolation-based algorithm or to uniquely solve least-squares problems which occur as subroutines of the AAA-Lawson method.
\end{enumerate}
The problems described in~\ref{item:difficultieswlarge} and~\ref{item:difficultieswsmall} can usually be avoided by choosing $\omega$ s.t.\ the unitary best approximant attains an error sufficiently below the maximal error of two and sufficiently above computer precision, respectively.
Thus, to successfully compute unitary best approximants it has some relevance to determine $\omega$ to attain a given error level, i.e.,
\begin{equation}\label{eq:problemfindw}
\text{for given $n$ and $\varepsilon>0$, find $\omega>0$ s.t.}~~\min_{r\in\mathcal{U}_n} \|r - \exp(\omega \cdot)\| = \varepsilon.
\end{equation}
The error of the unitary best approximant is monotonically increasing and continuous in $\omega$, approaches two for $\omega\to(n+1)\pi$ and vanishes for $\omega\to 0$, cf.~\cite[Section~3]{JS23u}. Thus, there exist solutions $\omega>0$ to~\eqref{eq:problemfindw} for $\varepsilon\leq 2$. 
Considering the focus of the present section, it is sufficient to determine $\omega$ s.t.
\begin{equation*}
\min_{r\in\mathcal{U}_n} \|r - \exp(\omega \cdot)\| \approx \varepsilon.
\end{equation*}
However, the presented estimates show to be accurate in most cases and also can be used for a~priori error estimation.

Computing unitary best approximants for $\omega$ as in~\eqref{eq:problemfindw} with an error $\varepsilon$ above computer precision and below the maximal error of two covers most practical cases considering applications of unitary best approximation. We remark that the unitary best approximant for given $\omega_1$ also provides an approximant to $\mathrm{e}^{\mathrm{i} \omega_2 x}$ attaining the same or higher accuracy for $\omega_2 < \omega_1$, or respectively, an approximation to $\mathrm{e}^{\mathrm{i} x}$ on $[-\omega_2,\omega_2]\subset [-\omega_1,\omega_1]$ when considering the setting of Remark~\ref{rmk:approxww}. Thus, in practice unitary best approximants computed for $\omega$ satisfying~\eqref{eq:problemfindw} also provide accurate approximants for smaller $\omega$, and restrictions to $\omega$ are not critical in this context.

In the following subsections we introduce two approaches to solve~\eqref{eq:problemfindw}, i.e., an approach based on experimental data and an approach based on the asymptotic error behavior of the unitary best approximant. Performance of these estimates is illustrated for numerical examples in \figurename~\ref{fig:testwest} in Section~\ref{sec:convergence} further below.

\subsection{Estimate based on experimental data}

In the present subsection we consider an estimate for $\omega$ in~\eqref{eq:problemfindw} based on experimental data. For given $n$ and $\varepsilon$ we let $\omega=\omega(n,\varepsilon)$ denote the solution to~\eqref{eq:problemfindw} and we define the scaling factor $\xi(n,\varepsilon) \in (0,1)$ as
\begin{equation*}
\xi(n,\varepsilon) = \omega(n,\varepsilon)/((n+1)\pi),
~~~\text{for $\omega(n,\varepsilon)$ satisfying~\eqref{eq:problemfindw}}.
\end{equation*}
Values of $\xi(n,\varepsilon)$ and $-\log\xi(n,\varepsilon)$ plotted in \figurename~\ref{fig:xistep1} imply that $-\log\xi(n,\varepsilon)$ satisfies a linear behavior as a function of $n$ in a logarithmic sense for various  $\varepsilon$, i.e., 
\begin{equation}\label{eq:xiansatz1}
\log(-\log\xi(n,\varepsilon)) \approx \log \widetilde{a}_\varepsilon + \widetilde{b}_\varepsilon \log n,
\end{equation}
for parameters $\widetilde{a}_\varepsilon$ and $\widetilde{b}_\varepsilon$ depending on $\varepsilon$. We proceed to consider polynomials in $\log \varepsilon$ for these parameters, namely,
\begin{equation}\label{eq:pabformulae0}
\widetilde{a}_\varepsilon = p_a(\log(\varepsilon)),~~~\text{and}~~
\widetilde{b}_\varepsilon = p_b(\log(\varepsilon)),
\end{equation}
where $p_a$ and $p_b$ are to be determined.
Substituting~\eqref{eq:pabformulae0} for $\widetilde{a}_\varepsilon$ and $\widetilde{b}_\varepsilon$ in~\eqref{eq:xiansatz1} and resolving for $\xi$, we arrive at
\begin{equation*}%\label{eq:xiansatz2}
\xi(n,\varepsilon) \approx \exp\left( -p_a(\log(\varepsilon)) n^{p_b(\log(\varepsilon))}\right).
\end{equation*}

\begin{figure}
    \centering
    %\tikzsetnextfilename{01xistep1grp}
    %\input{./img1/xistep1grp.tex}
    \includegraphics[width=0.9\textwidth]{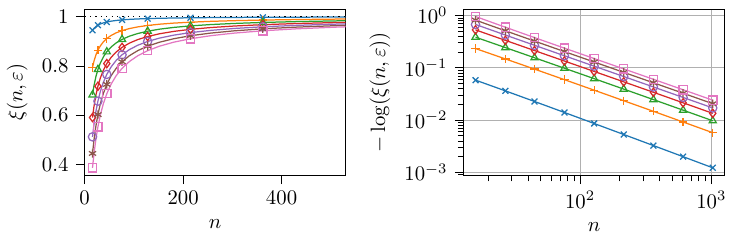}
    \caption{These plots show computed data for the scaling factor $\xi=\xi(n,\varepsilon)$ with $\omega = (n+1)\pi\xi$ s.t.\ $\omega$ satisfies the problem~\eqref{eq:problemfindw}. The lines in these plots illustrate $\xi(n,\varepsilon)$ (left) and $-\log (\xi(n,\varepsilon))$ (right) over $n$ for different values of $\varepsilon$. In particular, the lines marked by symbols '$+$, $\times$, $\Delta$, $\Diamond$, $\circ$, $*$' and '$\square$' correspond to $\varepsilon = 10^{0}, 10^{-2}, \ldots, 10^{-12}$, respectively.}
    \label{fig:xistep1}
\end{figure}

We proceed to derive an estimate $\omega_e$ for~\eqref{eq:problemfindw} by choosing $p_a$ and $p_b$ based on experimental data as following. We first compute $\omega(n,\varepsilon)$ for various values of $n$ and $\varepsilon$ s.t.\ the computed best approximants sandwich the error objective $\varepsilon$ as in~\eqref{eq:dererrsandwich} with $\delta<10^{-6}$. This procedure was done for $\varepsilon=10^{-14},\sqrt{10}\cdot10^{-14}, 10^{-13},\ldots,1$ with $n$ in a set of $25$ geometrically spaced values from $n=16$ to $n=1024$, using higher precision arithmetic for computations.
For each choice of $\varepsilon$ we compute $\widetilde{a}_\varepsilon$ and $\widetilde{b}_\varepsilon$ as in~\eqref{eq:xiansatz1} using a least-squares polynomial fitting method of degree $1$ for the computed sequence of $\omega(n,\varepsilon)$, $n=16,\ldots,1024$. We then compute $p_a$ and $p_b$ using a least-squares polynomial fitting method for the previously computed $\widetilde{a}_\varepsilon$ and $\widetilde{b}_\varepsilon$, $\varepsilon=10^{-14},\ldots,1$,  using polynomials of degrees $10$ and $11$, respectively. Using this approach to estimate the scaling factor $\xi$, and respectively $\omega=(n+1)\pi\xi$, we arrive at the estimate
\begin{subequations}\label{eq:est2w}
\begin{equation}
\omega_e(n,\varepsilon) = (n+1)\pi \exp\left( -p_a(\log(\varepsilon)) n^{p_b(\log(\varepsilon))}\right),
\end{equation}
with
\begin{equation}\label{eq:pabformulae}
\begin{aligned}
&p_a(t) = a_0 + a_1 t+ \ldots + b_{10} t^{10},~~~
p_b(t) = b_0 + b_1 t + \ldots + b_{11} t^{11},\\
&\text{and $a_j,b_j$ as in Table~\ref{table:coefpab}.}
\end{aligned}
\end{equation}
\end{subequations}
Moreover, for $\varepsilon<10^{-14}$ (which might has relevance when using higher precision arithmetic) we extrapolate the data computed for $10^{-14},\ldots,\sqrt{10}\cdot 10^{-13}$ which yields linear variants for $p_a$ and $p_b$ with coefficients shown at the bottom of Table~\ref{table:coefpab}. The polynomials $p_a$ and $p_b$ together with $\widetilde{a}_\varepsilon$ and $\widetilde{b}_\varepsilon$ are illustrated in \figurename~\ref{fig:xistep2}.

\begin{figure}
    \centering
    %\tikzsetnextfilename{02xistep2grp}
    %\input{./img1/xistep2grp.tex}
    \includegraphics[width=0.9\textwidth]{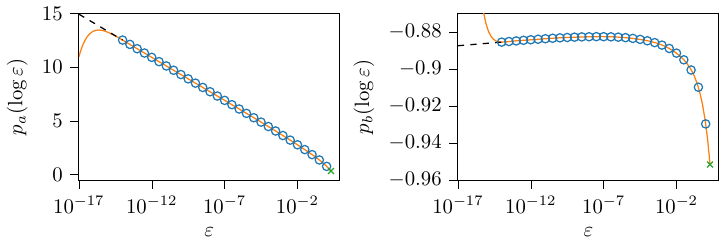}
    \caption{The solid lines in these plots show the polynomials $p_a$ (left) and $p_b$ (right) corresponding to~\eqref{eq:pabformulae}. The experimental values of $\widetilde{a}_\varepsilon$ (left) and $\widetilde{b}_\varepsilon$ (right) for $\varepsilon = 10^{-14}, \sqrt{10}\cdot 10^{-14}, 10^{-13}, \ldots, 1$ are marked by '$\circ$' symbols. The values of $p_a(\log \varepsilon)$ and $p_b(\log \varepsilon)$ for $\varepsilon=2$ are marked by '$\times$' symbols. The dashed line shows extrapolated data for $\varepsilon<10^{-14}$.}
    \label{fig:xistep2}
\end{figure}

\begin{table}
\caption{The tables at the top of this figure show the coefficients $a_j$ and $b_j$ of the polynomials $p_a$ and $p_b$, respectively, in~\eqref{eq:pabformulae}. The tables at the bottom show coefficients for $p_a$ and $p_b$ based on extrapolation suitable for the case $\varepsilon<10^{-14}$.}
\label{table:coefpab}
\begin{tabular}{|c|c|}
\hline
$j$&$a_j$\\\hline
0&$7.7325733748629055\cdot 10^{-1}$\\
1&$-5.777408873924058\cdot 10^{-1}$\\
2&$-6.860343132683391\cdot 10^{-2}$\\
3&$-1.4498935965331126\cdot 10^{-2}$\\
4&$-2.0017032381431967\cdot 10^{-3}$\\
5&$-1.792107115710027\cdot 10^{-4}$\\
6&$-1.0467338695044732\cdot 10^{-5}$\\
7&$-3.9545380249348945\cdot 10^{-7}$\\
8&$-9.304919862544986\cdot 10^{-9}$\\
9&$-1.2386694533170104\cdot 10^{-10}$\\
10&$-7.121569685837123\cdot 10^{-13}$\\
\hline\hline
$j$&$a_j$ extrapolation\\\hline
0&$1.2653161350741573\cdot 10^{0}$\\
1&$-3.4960298585304206\cdot 10^{-1}$\\
\hline
\end{tabular}
\begin{tabular}{|c|c|}
\hline
$j$&$b_j$\\\hline
0&$-9.296235152950844\cdot 10^{-1}$\\
1&$-2.4713673601660884\cdot 10^{-2}$\\
2&$-8.54706119111975\cdot 10^{-3}$\\
3&$-2.0382018252632794\cdot 10^{-3}$\\
4&$-3.2440829161667404\cdot 10^{-4}$\\
5&$-3.459972041530702\cdot 10^{-5}$\\
6&$-2.4972665972026706\cdot 10^{-6}$\\
7&$-1.2203258361585594\cdot 10^{-7}$\\
8&$-3.971747584379515\cdot 10^{-9}$\\
9&$-8.237224551239086\cdot 10^{-11}$\\
10&$-9.84139635152686\cdot 10^{-13}$\\
11&$-5.152327054589812\cdot 10^{-15}$\\
\hline\hline
$j$&$b_j$ extrapolation\\\hline
0&$-8.76285182160704\cdot 10^{-1}$\\
1&$2.8332004893961966\cdot 10^{-4}$\\
\hline
\end{tabular}
\centering
\end{table}

\subsection{Asymptotic estimate}
The asymptotic error of the unitary best approximant for $\omega\to0$ is studied in~\cite[Section~8]{JS23u} and these results are based on related asymptotic errors of Pad\'e approximation, cf.~\cite{Ja24cheb}. The leading order term of the asymptotic error is suggested to be used as an error estimate in~\cite[eq.~(8.10)]{JS23u}, i.e.
\begin{equation*}
\min_{r\in\mathcal{U}_n} \|r - \exp(\omega \cdot)\| \approx \frac{2(n!)^2 (\omega/2)^{2n+1}}{(2n)!(2n+1)!}.
\end{equation*}
We derive an estimate $\omega_a$ for~\eqref{eq:problemfindw} by setting the error estimate above equal to $\varepsilon$ and resolving this equation for $\omega$, i.e.,
\begin{subequations}\label{eq:asymestw}
\begin{equation}
\omega_a(n,\varepsilon) = 2\left(\frac{ \varepsilon (2n)!(2n+1)!}{2(n!)^2} \right)^{1/(2n+1)}.
\end{equation}
To avoid arithmetic overflow for larger $n$ in practice, this formula can be evaluated as
\begin{equation}
\omega_a(n,\varepsilon) = 2\exp\left(\frac{\log(\varepsilon(2n+1)/2) + 2 \sum_{j=1}^n \log (n+j)}{2n+1}\right).
\end{equation}
\end{subequations}

\section{Algorithms}\label{sec:algorithms}

We consider an interpolation-based algorithm~\ref{item:algbrib} and the AAA-Lawson method~\ref{item:alglawson} to compute the unitary best approximant in subsections~\ref{subsec:brib} and~\ref{subsec:alglawson}, respectively.

\subsection{Interpolation-based algorithm}\label{subsec:brib}
In the present subsection we consider the approach~\ref{item:algbrib} to compute the unitary best approximant. Namely, by rational interpolation in nodes which are successively corrected with the aim that the maximal error on the intermediate subintervals approaches the uniform error.
This approach goes back to~\cite{Ma63} for rational Chebyshev approximation to real functions and relies on existence of interpolation nodes and intermediate points of uniform maximal error which are a consequence of an equioscillatory property of rational Chebyshev approximation in real settings, cf.~\cite[Chapter~24]{Tre13}.
While in complex settings such results do not hold true in general, the unitary best approximant satisfies necessary properties to utilize the idea of Maehly's second method. 
In particular, interpolation~\eqref{eq:urbinterpolate} and equal maximal errors on intermediate subintervals~\eqref{eq:urbequerr}.
The unitary best approximant is characterized by an equioscillating phase error~\eqref{eq:phaseerrequioscillates}, and the identities~\eqref{eq:urbequerr} and~\eqref{eq:urbinterpolate} can be understood as consequences of this equioscillatory property. Nevertheless, various variants of Maehly's second method, aiming to satisfy~\eqref{eq:urbequerr} and~\eqref{eq:urbinterpolate}, succeed to provide approximants with an equioscillating phase error~\eqref{eq:phaseerrequioscillates} in practice. It seems to be crucial in this context that rational interpolants to $\mathrm{e}^{\mathrm{i} \omega x}$ in real points are unitary, cf.~\cite{JS23u,JS24}.

To further describe the interpolation-based algorithm in the present subsection we make use of the notation in~\eqref{eq:nonuniforminterpsetting} for the rational interpolant, the underlying interpolation nodes and the points of maximal error at intermediate subintervals. Moreover, we assume rational interpolants exist and are non-degenerate in practice, in particular, satisfying conditions of Proposition~\ref{prop:symmetry} if the underlying interpolation nodes are mirrored around zero. The main iteration of the interpolation-based algorithm is sketched in \algorithmname~\ref{alg:bribiteration}.

The procedure to correct interpolation nodes is repeated until the error in uniformity $\delta$ defined as in~\eqref{eq:derapproxerr} is smaller than a given tolerance or a maximal number of iterations is reached. In particular, the error in uniformity $\delta$ is available on the run for the interpolation-based algorithm and can be used as a stopping criterion in this context. Applying Corollary~\ref{cor:errsandwich} under the assumption that the respective conditions hold true, we note that a small error in uniformity entails that the uniform error of the computed approximant is close to the uniform error of the unitary best approximant.
Moreover, Corollary~\ref{cor:errsandwich} shows that the interpolant computed within the iteration of \algorithmname~\ref{alg:bribiteration} converges to the unitary best approximant in case $\delta\to0$.
A similar stopping criterion is also used in the BRASIL algorithm~\cite{Ho21} for best approximation to real functions with some lack of theory.

A crucial part considering the success of the interpolation-based algorithm is a suitable strategy to correct the interpolation nodes at each iteration, i.e., the procedure used in line~\ref{alg:line:nodecorrection} of \algorithmname~\ref{alg:bribiteration}. Besides Maehly's second method, Algorithm~G~\cite{Fra76} and the BRASIL algorithm~\cite{Ho21} also utilize the approach~\ref{item:algbrib} to compute rational Chebyshev approximants to real functions using different strategies for interpolation nodes correction. We consider the strategies of Maehly's second method and the BRASIL algorithm for interpolation nodes correction in the present work, namely in Subsection~\ref{subsec:brib:nodecorrection}. The performance of different strategies is illustrated in Section~\ref{sec:convergence}. In particular, the strategy of Maehly's second method shows to perform best if the intermediate points of maximal error are properly detected, the approximation error is non-maximal and the phase error has an alternating sign. On the other hand, the strategy of the BRASIL algorithm re-scales subintervals based on the intermediate maximal error and eventually succeeds interpolation nodes correction even if points of intermediate maximal errors are not properly detected in the initial phase. Thus, we suggest to combine these strategies as in Subsection~\ref{subsec:bestalgorithm} for best overall performance.

\begin{algorithm}
\caption{The main iteration of the interpolation-based algorithm to compute the unitary best approximant to $\mathrm{e}^{\mathrm{i} \omega x}$.}
\label{alg:bribiteration}
\begin{algorithmic}[1]
\Require{$n,w,x_1^{\text{init}},\ldots,x_{2n+1}^{\text{init}},maxiter,\text{tol}_\delta$}
\State{$x_1,\ldots,x_{2n+1} \leftarrow x_1^{\text{init}},\ldots,x_{2n+1}^{\text{init}}$}
\Comment{initial interpolation nodes}
\For{$k=1,\ldots,maxiter$}
\State{$r\leftarrow$ $(n,n)$-rational interpolant $r(\mathrm{i} x) \approx \mathrm{e}^{\mathrm{i}\omega x}$ at nodes $x_1,\ldots,x_{2n+1}$}
\State{$err(x) := |r(\mathrm{i} x)-\mathrm{e}^{\mathrm{i}\omega x}|$}
\State{$\eta_1,\ldots,\eta_{2n+2}\leftarrow find\_local\_error\_max(x_1,\ldots,x_{2n+1},err,\ldots)$}
\Comment{as in~\eqref{eq:localmaxerr}}%
\label{alg:line:findlocalmax}
\State{$\delta\leftarrow 1-\min_k err(\eta_k)/\max_j err(\eta_j)$}
\Comment{error in uniformity~\eqref{eq:derapproxerr}}
\State{$\phi_j\leftarrow \operatorname{angle}(r(\mathrm{i} \eta_j)/\mathrm{e}^{\mathrm{i} \omega\eta_j})$ for $j=1,\ldots,2n+2$}
\Comment{phase error~\eqref{eq:phaseerrangle}}
\If{$\phi_1,\ldots,\phi_{2n+2}$ are alternating, $\max_j err(\eta_j) < 2$ and $\delta<\text{tol}_\delta$}
%\label{alg:line:stoppingcriteron}
\State{\Return{$r$}}
\EndIf
\State{$x_1,\ldots,x_{2n+1} \leftarrow interpolation\_nodes\_correction(\ldots)$}
\label{alg:line:nodecorrection}
\EndFor
\State{\Return{$r$}}
\end{algorithmic}
\end{algorithm}

\subsubsection{A symmetric setting}\label{subsec:brib:sym}
The unitary best approximant is symmetric~\cite[Section~6]{JS23u} and its interpolation nodes~\eqref{eq:urbinterpolate} are mirrored around zero as in~\eqref{eq:xjmirrored}. It seems natural to choose mirrored interpolation nodes when initializing the interpolation-based algorithm, which implies that the rational interpolant computed in the first iteration is symmetric due to Proposition~\ref{prop:symmetry}. Consequently this carries over to the points $\eta_1,\ldots,\eta_{2n+2}$ as in~\eqref{eq:etajmirrored}. 

The approaches for interpolation nodes correction suggested in Subsection~\ref{subsec:brib:nodecorrection} keep the interpolation nodes mirrored around zero as in~\eqref{eq:xjmirrored} in practice. Thus, we may assume a symmetric setting throughout the interpolation-based algorithm which can be utilized to reduce computational cost at various steps of the algorithm. 

\subsubsection{Initial interpolation nodes}
Performance of the presented algorithm depends on the set of initial interpolation nodes, i.e., $x_1^{\text{init}},\ldots,x_{2n+1}^{\text{init}}\!\!\in(-1,1)$ in \algorithmname~\ref{alg:bribiteration}. From~\cite[Proposition~9.3]{JS23u} we recall that in the limit $\omega\to 0$ the interpolation nodes $x_1,\ldots,x_{2n+1}$ converge to the Chebyshev points, i.e., the zeros of the Chebyshev polynomial of degree $2n+1$. On the other hand, in the limit $\omega\to (n+1)\pi$ the interpolation nodes converge to equispaced points, namely, $x_j\to-1 + j/(n + 1)$ for $j=1,\ldots,2n+1$ as shown in~\cite[Proposition~9.5]{JS23u}. We suggest using a linear combination of these two limits as initial interpolation nodes, i.e.,
\begin{equation}\label{eq:xinit}
x_j^{\text{init}} = (1-\xi)\theta_j + \xi (-1 + j/(n + 1)),~~~j=1,\ldots,2n+1.
\end{equation}
where $\xi=\omega/((n+1)\pi)$ and $\theta_1,\ldots,\theta_{2n+1}\in(-1,1)$ correspond to the Chebyshev points. 

We remark that for Algorithm~G~\cite{Fra76} the authors suggest using Chebyshev points as initial nodes to compute Chebyshev approximants to real functions. It was recently shown in~\cite{Ja24cheb} that Chebyshev approximants on shrinking domains attain interpolation nodes which approach scaled Chebyshev points in general settings. Thus, taking Chebyshev points as initial interpolation nodes for the problems considered in~\cite{Ma63, Fra76, Ho21} seems justified in certain settings.

\subsubsection{Restarting the iteration and recycling interpolation nodes }
The discussed strategies for interpolation nodes correction in \algorithmname~\ref{alg:bribiteration} depend only on a single iteration step. Thus, this algorithm can be restarted by providing the previously computed interpolation nodes as initial interpolation nodes $x_1^{\text{init}},\ldots,x_{2n+1}^{\text{init}}$ without delaying convergence.

Moreover, in case the unitary best approximant for a frequency $\omega_1$ is desired and the interpolation nodes of the unitary best approximant for a frequency $\omega_2\approx\omega_1$ are available, using these nodes as initial interpolation nodes when computing the unitary best approximant for $\omega_1$ usually decreases the number of required iterations.

\subsubsection{Finding points of intermediate maxima}\label{subsec:brib:findmax}
Various approaches to find local maxima of functions can be applied to find the intermediate points of maximal error $\eta_j$ in line~\ref{alg:line:findlocalmax} in \algorithmname~\ref{alg:bribiteration}, e.g., routines suggested in~\cite{Ma63,Ho21}.
Depending on the initial nodes and the choice of $n$ and $\omega$, at first iterations it might occur that the approximation error is below computer arithmetic in some subintervals or $\|r-\exp(\omega \cdot)\|=2$. In these cases, the intermediate maxima $\varepsilon_j$ can be approximately computed by sampling the approximation error at equispaced points on each subinterval which is usually sufficient for interpolation nodes correction in the initial phase as discussed in the following subsection.

In practice, finding the points $\eta_j$ or $\varepsilon_j$ at each iteration cycle usually occupies a major part of the runtime of \algorithmname~\ref{alg:bribiteration}. However, the choice of the subroutine to find the intermediate maxima plays a minor role considering convergence rates and overall success of computing the unitary best approximant in general, and is not discussed in full detail in the present work. Nevertheless, we suggest to take advantage of the symmetric setting to reduce computational cost for this procedure, particularly, the identity~\eqref{eq:etajmirrored} which halves the number of points to be localized.

\subsubsection{Interpolation nodes correction}\label{subsec:brib:nodecorrection}

In the present subsection we discuss strategies for interpolation nodes correction as used in line~\ref{alg:line:nodecorrection} of \algorithmname~\ref{alg:bribiteration}. In particular, we consider the strategies of Maehly's second method~\cite{Ma63}, Algorithm~G~\cite{Fra76} and the BRASIL algorithm~\cite{Ho21} which were originally introduced for interpolation nodes correction in a real setting. For the strategy of Maehly's second method we also consider a simplified version as suggested in~\cite{Du65}.

\paragraph{BRASIL algorithm}

We consider the strategy of the BRASIL algorithm as in~\cite[Algorithm~2]{Ho21} which re-scales the length of the subintervals between the interpolation nodes based on the intermediate maximal errors.
This strategy ensures that the interpolation nodes remain in the interval $(-1,1)$, and seems to provide the most robust choice for nodes correction. It eventually succeeds nodes correction for the case $\|r-\exp(\omega \cdot)\|=2$, or if the points $\eta_j$ are not properly detected in sub-intervals with local errors below computer precision, which may occur in the initial phase of the algorithm for large $n$.

The strategy of the BRASIL algorithm is sketched in \algorithmname~\ref{alg:BRASIL}. The scaling factor $\kappa$ therein can be understood as a scaled version of the scaling factor $\tau$ in~\cite[Algorithm~2]{Ho21}, namely $\tau = \kappa/n$. We suggest using $\kappa = 2.2$ for the scaling factor which seems particularly relevant for success of this strategy.

\begin{algorithm}
\caption{The strategy for interpolation nodes correction used in the BRASIL algorithm~\cite{Ho21}.}
\label{alg:BRASIL}
\begin{algorithmic}[1]
\Require{$x_1,\ldots,x_{2n+1}, \varepsilon_1, \ldots, \varepsilon_{2n+2}, \sigma_{\max},\kappa$}
\Comment{$\varepsilon_j = |r(\mathrm{i} \eta_j)-\mathrm{e}^{\mathrm{i}\omega \eta_j}|$}
\State{$\overline{\varepsilon} = \sum_{j=1}^{2n+2} \varepsilon_j/(2n+2)$}
\State{$\overline{\gamma} = \max_{j=1,\ldots,2n+2} |\varepsilon_j-\overline{\varepsilon} |$}
\State{$\sigma =\min\{\sigma_{\max},\kappa\overline{\gamma}/(n\overline{\varepsilon}) \}$}
\Comment{scaling factor $\kappa/n$}
\State{$\gamma_k =(\varepsilon_k-\overline{\varepsilon} )/\overline{\gamma},~~~k=1,\ldots,2n+2$}
\State{$\ell_k = (1-\sigma)^{\gamma_k}(x_k-x_{k-1}),~~~k=1,\ldots,2n+2$}
\Comment{using $x_{0}=-1$ and $x_{2n+2}=1$}
\State{$x_j \leftarrow \sum_{k=1}^{j}\ell_k \big/ \sum_{m=1}^{2n+2} \ell_m ,~~~j=1,\ldots,2n+1$}
\State{\Return{$x_1,\ldots,x_{2n+2}$}}
\end{algorithmic}
\end{algorithm}

\paragraph{Maehly's second method}
We proceed to apply the ideas described in~\cite[Section~9]{Ma63} and~\cite{Du65} for interpolation nodes correction. 
Consider the setting~\eqref{eq:nonuniforminterpsetting} were $x_1,\ldots,x_{2n+1}$ and $\eta_1,\ldots,\eta_{2n+2}$ correspond to the current interpolation nodes and points of intermediate maxima, respectively, and let $\varepsilon_1,\ldots,\varepsilon_{2n+2}$ denote the intermediate maximal errors as in~\eqref{eq:epsj}.
A correction $\delta x_k$ for the interpolation node $x_k$ is described in~\cite[eq.~(9.5)]{Ma63} via the linear system
\begin{equation*}%\label{eq:dxkeqMa63}
\log \lambda + \sum_{k=1}^{2n+1} \frac{\delta x_k}{\eta_j - x_k }
= \log \varepsilon_j,~~~j=1,\ldots,2n+2,
\end{equation*}
where $\lambda>0$ and $\delta x_1,\ldots,\delta x_{2n+1}$ are unknowns. By eliminating $\lambda$, the correction step corresponds to
\begin{equation}\label{eq:Ma63correctionxsym}
x_j \leftarrow x_j + \delta x_j,~~~j=1,\ldots,2n+1,~~\text{where $\delta x=(\delta x_j)_{j=1}^{2n+1}$ solves $M\delta x=b$}.
\end{equation}
for the matrix $M\in\mathbb{R}^{(2n+1)\times(2n+1)}$ defined by
\begin{equation}\label{eq:Ma63correctionM}
M_{jk} = \frac{\eta_1-\eta_{j+1}}{(\eta_{j+1}-x_k)(\eta_1-x_k)},~~~j,k=1,\ldots,2n+1,
\end{equation}
and the right-hand side vector $b\in\mathbb{R}^{2n+1}$ defined by
\begin{subequations}%\label{eq:Ma63correctionb}
\begin{equation}\label{eq:Ma63correctionbj1}
b_j = \log(\varepsilon_{j+1}/\varepsilon_1),~~~j=1,\ldots,2n+1.
\end{equation}
This system is described as well-conditioned which shows to be true for the examples tested in the present work. However, we only suggest using this strategy in case that the phase error has an alternating sign at the points $\eta_1,\ldots,\eta_{2n+2}$ (similar to~\eqref{eq:phaseerrnonunfaltern}), the points $\eta_j$ are available with sufficient precision and the rational interpolant satisfies $\|r-\exp(\omega \cdot)\|<2$, conditions which might not hold true at an initial phase. In this case, the correction of the interpolation nodes in~\eqref{eq:Ma63correctionxsym} could fail, unlike the strategy of the BRASIL algorithm, to provide interpolation nodes in ascending order inside the interval $(-1,1)$.

Assuming that the alternating error is alternating, it seems to further improve stability of the correction strategy to modify the right-hand side of the system in line with~\cite[eq.~(9.6)]{Ma63} by replacing the vector $b$ by an approximation based on a bilinear transform, i.e.,
\begin{equation}\label{eq:Ma63correctionbj2}
b_j \approx 2(\varepsilon_{j+1}-\varepsilon_1)/(\varepsilon_{j+1}+\varepsilon_1),~~~j=1,\ldots,2n+1.
\end{equation}
\end{subequations}
We suggest using~\eqref{eq:Ma63correctionbj1} for $\delta<0.1$ and the approximation~\eqref{eq:Ma63correctionbj2} otherwise, since the former shows better performance but seems to be more prone to return invalid interpolation nodes for larger $\delta$.

A simplification of Maehly's strategy is suggested in~\cite{Du65}. In particular, the authors apply a direct formula~\cite[eq.~(5)]{Du65} (based on a direct formula in~\cite{Sche59}) for the solution of the related linear system which improves performance of the algorithm and decreases computational cost.
Utilizing the ideas of \cite{Du65} the following formula can be used to evaluate the correction $\delta x_j$ in~\eqref{eq:Ma63correctionxsym} with $b_\ell$ as defined below. Namely, 
\begin{equation}\label{eq:MDdeltaxdirect}
\delta x_j = \frac{\prod_{k=1}^{2n+2} (x_j-\eta_k)}{\prod_{\substack{k=1 \\ k\neq j}}^{2n+1} (x_j-x_k)}
\sum_{\ell = 1}^{2n+2} \left(\frac{b_\ell}{x_j-\eta_\ell} \frac{\prod_{k=1}^{2n+1} (\eta_\ell-x_k)}{\prod_{\substack{k=1 \\ k\neq \ell}}^{2n+2} (\eta_\ell-\eta_k)}\right),~~~j=1,\ldots,2n+1,
\end{equation}
with
\begin{subequations}
\begin{equation}\label{eq:Ma63correctionbj1x}
b_j = \log(\varepsilon_j/\overline{\varepsilon}),~~~j=1,\ldots,2n+2,
\end{equation}
where $\overline{\varepsilon}$ refers to the geometric mean of $\varepsilon_1,\ldots,\varepsilon_{2n+2}$. The formula in~\eqref{eq:MDdeltaxdirect} can be evaluated with computational cost of $\mathcal{O}(n^2)$, comparing with cost of $\mathcal{O}(n^3)$ for solving the related linear equation, cf.~\cite{Du65}.
Similar as above, we may also consider the approximation
\begin{equation}\label{eq:Ma63correctionbj2x}
b_j \approx 2 (\varepsilon_j-\overline{\varepsilon})/(\varepsilon_j+\overline{\varepsilon}),~~~j=1,\ldots,2n+2,
\end{equation}
\end{subequations}
in case of $\delta>0.1$ to modify the equation.

We remark that symmetry properties can be used to further simplify the interpolation nodes correction~\eqref{eq:Ma63correctionxsym} as well as the simplified formula~\eqref{eq:MDdeltaxdirect}.

\paragraph{Algorithm G}
The strategy for interpolation nodes correction suggested in~\cite{Fra76} (particularly in Section~4 therein) consists of a time propagation step for the system~\cite[eq.~(1)]{Fra76} using Euler's method. This strategy is introduced as an improvement to the strategy in Maehly's second method in terms of stability to compute approximants to certain functions as noted therein. However, for interpolation nodes correction for the unitary best approximant this strategy shows to perform worse than the other strategies and is not further considered in the present work.

\subsubsection{Recommended strategy}\label{subsec:bestalgorithm}
We suggest combining the strategy of interpolation nodes correction of the BRASIL method and Maehly's second method as follows. In case that points of maximal errors at some sub-intervals are not properly detect, e.g., due to limitations of computer arithmetic, or that the phase error is not alternating, we suggest using the strategy of the BRASIL algorithm due to its robustness in such settings. Such cases might occur at the first iterations for certain $n$ and $\omega$.
Provided the unitary best approximant for the given $n$ and $\omega$ has an approximation error sufficiently above computer arithmetic which is certainly the case if $\omega$ satisfies~\eqref{eq:problemfindw} for a respective error level $\varepsilon$, the error of the computed approximant is eventually above computer arithmetic after some correction cycles using the strategy of the BRASIL algorithm. Usually, $\|r-\exp(\omega\cdot)\|<2$ holds true at that point as well and the phase error has an alternating sign at the points $\eta_1,\ldots,\eta_{2n+2}$.
Once these conditions hold true, we suggest to apply the correction strategy of Maehly's second method to increase convergence speed, using the simplified formula~\eqref{eq:MDdeltaxdirect}. Initially, using the modified right-hand sides for the linear systems, e.g.,~\eqref{eq:Ma63correctionbj2x}, until the error in uniformity satisfies $\delta<0.1$. Once this condition holds true, using the strategy of Maehly's second method without modification, e.g.,~\eqref{eq:Ma63correctionbj1x}, further improves convergence and can be used until the stopping criterion is met.

\subsection{AAA-Lawson method}\label{subsec:alglawson}
In the present subsection we consider the AAA-Lawson method~\cite{NST18,NT20}, which is also listed as approach~\ref{item:alglawson} further above. The AAA-Lawson method consists of an initial AAA phase in which a rational approximant is constructed by rational interpolation in nodes adaptively selected from a given set of test nodes, and a Lawson-type iteration which further improves accuracy of the approximant minimizing a successively reweighted least-squares problem.

The AAA method was originally designed to construct near-best approximants to real as well as complex functions, and the AAA-Lawson method aims to construct best approximants in a Chebyshev sense for such problems. Thus, these approaches can be directly applied to construct $(n,n)$-rational approximants to~$\mathrm{e}^{\mathrm{i} \omega x}$. Moreover, it was recently shown in~\cite{JS24} that approximants to $\mathrm{e}^{\mathrm{i}\omega x}$ generated by the AAA and AAA-Lawson methods are unitary. Thus, these methods provide good candidates for the unitary best approximant.

The error in uniformity $\delta$ is defined in~\eqref{eq:derapproxerr} for an interpolatory setting which is not necessary given for the approximant computed by the AAA-Lawson method. However, in case the computed approximant satisfies the respective interpolation condition the results of Corollary~\ref{cor:errsandwich} can potentially be used to evaluate performance of the AAA-Lawson method on the run. Compared to the interpolation-based algorithm this requires additional computational effort since interpolation nodes $x_j$ and points of intermediate maximal errors $\eta_j$ have to be detected first which might not be practical.

For the numerical experiments done for the AAA and AAA-Lawson methods we choose equispaced test nodes, and test nodes which are adjusted on the run as suggested in~\cite{DNT24}. Numerical experiments are provided in Section~\ref{sec:convergence}. The figures therein also illustrate the error in uniformity for the AAA-Lawson method in case the respective interpolation condition holds true.

\section{Numerical experiments and conclusion}\label{sec:convergence}
In the present section we illustrate accuracy of the a~priori estimates introduced in Section~\ref{sec:setting} and performance of the algorithms from Section~\ref{sec:algorithms} to compute the unitary best approximant. For the latter we consider the approach \ref{item:algbrib} using the strategy of the BRASIL algorithm and the combined strategy suggested in Subsection~\ref{subsec:bestalgorithm} which mostly corresponds to the strategy of Maehly's second method. Moreover, the AAA and AAA-Lawson methods are tested with equispaced and adaptively chosen test nodes as considered in Subsection~\ref{subsec:alglawson}.

{\bf A~priori estimates for $\omega$.} 
In \figurename~\ref{fig:testwest} we consider the estimates from Section~\ref{sec:setting} to determine $\omega$ s.t.~\eqref{eq:problemfindw} holds true for different degrees $n$ from $n=2$ to $n=1024$ and error objectives $\varepsilon$ between $\varepsilon=10^{-12}$ and $\varepsilon=0.3$.
Namely, the estimates $\omega_e$~\eqref{eq:est2w} and $\omega_a$~\eqref{eq:asymestw} discussed therein. The choices of $n$ and $\varepsilon$ aim to cover a wide range of practical degrees and error objectives which are mostly distinct to the degrees and error objectives used to generate the estimate $\omega_e$. For each tested degree $n$ and error objective $\varepsilon$ the errors attained by the unitary best approximant computed for the frequencies $\omega_e(n,\varepsilon)$ and $\omega_a(n,\varepsilon)$ are plotted over $n$. Thus, this figure illustrates how the actual errors for these frequencies match the error objective $\varepsilon$ which is also illustrated for each tested value therein. We conclude that for small $n$, approximately $n\leq 20$, and small or moderate error objectives $\varepsilon$ the asymptotic error estimate $\omega_a$ performs sufficiently well. For larger $n$ or $\varepsilon$ we suggest using the estimate $\omega_e$ based on experimental data. To implement this observation in practice for a given $n$ we suggest using the estimate $\omega_a$ for $\varepsilon < 10^{-2(n-4)/3}$, and $\omega_e$ otherwise.

While the experimental data to construct the estimate $\omega_e$ was computed using higher precision arithmetic, we used standard double precision arithmetic to compute the results for $\omega_e$ and $\omega_a$ in \figurename~\ref{fig:testwest}. For some larger values of $n$ the accuracy of the approximants for $\omega_e(n,\varepsilon)$ with $\varepsilon\approx 10^{-12}$ is limited by computer arithmetic to some degree.

\begin{figure}
    \centering
    %\tikzsetnextfilename{05testerrest}
    %\input{./img1/testerrest.tex}
    \includegraphics[width=0.9\textwidth]{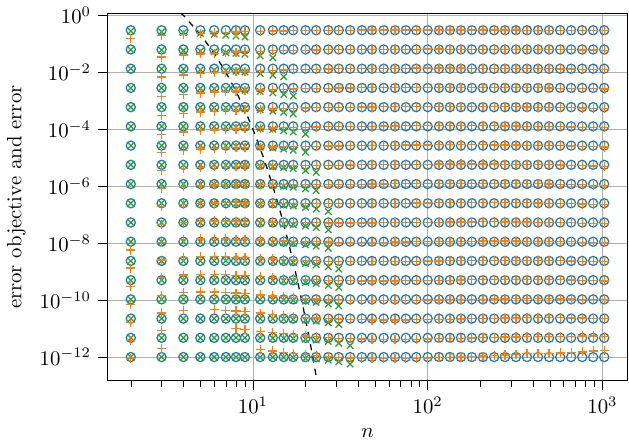}
    \caption{The symbols '$\circ$' mark a set of error objectives $\varepsilon$ over degrees $n$ which are used to construct estimates for $\omega$ with the aim that the respective unitary best approximant attains an error of $\varepsilon$. The errors of the computed unitary best approximants using the estimates $\omega_e(n,\varepsilon)$~\eqref{eq:est2w} and $\omega_a(n,\varepsilon)$~\eqref{eq:asymestw} for $\omega$ are marked by '$+$' and '$\times$', respectively. Marks close to each other correspond to the same value of $\varepsilon$, and marks are neglected in case the error computed for $\omega_e$ or $\omega_a$ is too far from the error objective. The dashed line shows $10^{-2(n-4)/3}$ over $n$.}
    \label{fig:testwest}
\end{figure}

{\bf Approximation error and error in uniformity in computer arithmetic.}
Figures~\ref{fig:n32brib}--\ref{fig:n256AAAL} show the error $\|r-\exp(\omega\cdot)\|$ and the error in uniformity~$\delta$ (defined in~\eqref{eq:derapproxerr} for an interpolatory setting) for computed approximants. While $\delta$ is well-defined for approximants computed by the interpolation-based algorithm, this is only the case for the AAA-Lawson method if the respective interpolation condition is met. In particular, this holds true when approximants are sufficiently close to the unitary best approximant. 

The error in uniformity~$\delta$ provides a strong tool to illustrate performance of algorithms due to Corollary~\ref{cor:errsandwich}. Besides an upper bound for the relative distance between the errors of the computed and the unitary best approximant, this corollary shows that $\delta\to 0$ implies convergence of the computed approximant to the unitary best approximant.
Making use of the notation $\varepsilon_j$ and~\eqref{eq:errnormismaxetaj} we note that the error in uniformity $\delta$ corresponds to
\begin{equation*}%\label{eq:epsjdifference}
\delta
= \left.\left(\max_{k=1,\ldots,2n+2} \varepsilon_k - \min_{j=1,\ldots,2n+2} \varepsilon_j\right) \right/  \|r-\exp(\omega\cdot)\|.
\end{equation*}
In practice, the accuracy when computing $\delta$ is approximately limited by computer precision divided by the uniform error due to limitations of computer arithmetic when evaluating $\varepsilon_1,\ldots,\varepsilon_{2n+2}$. Thus, $\delta\to0$ is only observed roughly up to this accuracy.
E.g., in our numerical experiments which illustrate results for the interpolation-based algorithm using double precision arithmetic the error in uniformity starts to stagnate at $ \delta \approx 10^{-14} / \|r-\exp(\omega\cdot)\| $ for $n=32$ (\figurename~\ref{fig:n32brib}) and $ \delta \approx 10^{-12} / \|r-\exp(\omega\cdot)\| $ for $n=256$ (\figurename~\ref{fig:n256brib}).
In our numerical experiments with the AAA-Lawson method a similar behavior would be expected. However, the approximants computed by the AAA and AAA-Lawson methods show significantly larger errors in uniformity.

{\bf Performance of algorithms.}
To compare performance of different algorithms we choose the degrees $n=32$ and $n=256$ for different choices of $\omega$ s.t.\ the unitary best approximant approximately attains the error levels $\varepsilon=10^{-12},10^{-10},\ldots,10^{-2},10^{-1}$. As a reference, the respective frequencies $\omega$ used for numerical examples are illustrated Table~\ref{table:wanderror}. The errors shown in this table correspond to the error of the approximant computed by the interpolation-based algorithm with an error in uniformity of $\delta<10^{-6}$ using higher precision arithmetic.

\begin{table}
\caption{Values of $\omega$ used to test performance of algorithms for $n=32$ and $n=256$. The frequencies $\omega$ are chosen s.t.\ the unitary best approximant for $\omega$ and $n$ attains certain error levels, and are rounded to two decimal places for reusability. For each $\omega$ we show the reference error which corresponds to the error $\|r-\exp(\omega\cdot)\|$ of the approximant computed using the interpolation-based algorithm with higher precision arithmetic and error in uniformity $\delta<10^{-6}$.}
\label{table:wanderror}
\begin{tabular}{|c|c||c|c|}
\hline
\multicolumn{2}{|c||}{$n=32$}&\multicolumn{2}{c|}{$n=256$}\\\hline
$\omega$& ref.\ error $\approx$&
$\omega$& ref.\ error $\approx$\\\hline
$95.48$&$1.00\cdot10^{-1}$ &$797.18$&$1.00\cdot10^{-1}$\\
$91.35$&$1.00\cdot10^{-2}$ &$791.45$&$1.00\cdot10^{-2}$\\
$84.16$&$1.00\cdot10^{-4}$ &$780.93$&$1.00\cdot10^{-4}$\\
$77.86$&$1.01\cdot10^{-6}$ &$771.16$&$1.00\cdot10^{-6}$\\
$72.19$&$1.01\cdot10^{-8}$ &$761.89$&$1.00\cdot10^{-8}$\\
$67.03$&$1.01\cdot10^{-10}$&$753.01$&$1.01\cdot10^{-10}$\\
$62.29$&$1.00\cdot10^{-12}$&$744.44$&$1.00\cdot10^{-12}$\\
\hline
\end{tabular}
\centering
\end{table}

In figures~\ref{fig:n32brib} and~\ref{fig:n256brib} we illustrate the performance of the interpolation-based algorithm for $n=32$ and $n=256$. We show results for the frequencies $\omega$ provided in Table~\ref{table:wanderror}. In these figures the error $\|r-\exp(\omega\cdot)\|$ (top row) and the error in uniformity $\delta$ (bottom row) are plotted over the iteration number (cf.~iteration in \algorithmname~\ref{alg:bribiteration}) for the interpolation nodes correction strategy of the BRASIL algorithm, and the combined strategy suggested in Subsection~\ref{subsec:bestalgorithm}, which for these example only consists of the strategy of Maehly's second method. The combined strategy succeeds in reaching a smallest possible error in uniformity within a small number of iterations, i.e., approximately computer precision divided by the respective error $\|r-\exp(\omega\cdot)\|$.

\begin{figure}
    \centering
    %\tikzsetnextfilename{04errdev032brib}
    %\input{./img1/06errdevbrib.tex}
    \includegraphics[width=0.9\textwidth]{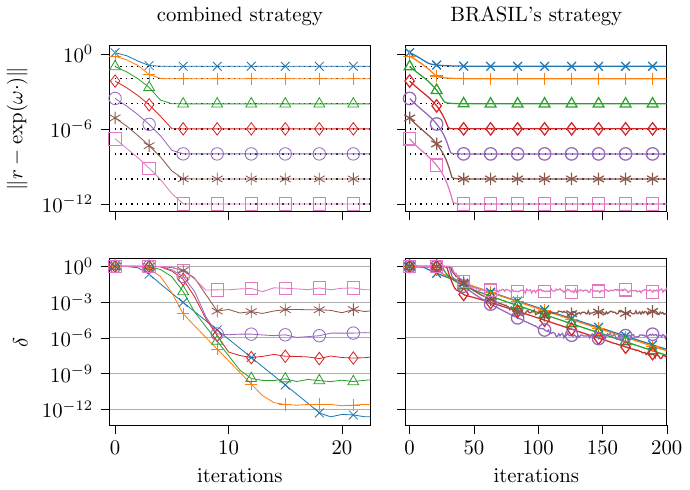}
    \caption{These plots illustrate the performance of the interpolation-based algorithm for $n=32$ and different frequencies $\omega$ over the number of iterations. The top and bottom rows show the error $\|r-\exp(\omega\cdot)\|$ and the error in uniformity $\delta$, respectively, for the interpolation-based algorithm using the combined strategy (left column) and the strategy of the BRASIL algorithm (right column) for interpolation nodes correction. The combined strategy is introduced in Subsection~\ref{subsec:bestalgorithm}, and for the present example, only consists of the strategy of Maehly's second method. In each plot different lines show results for the different frequencies $\omega$ provided in Table~\ref{table:wanderror}, i.e., $\omega = 95.48$ '$\times$', $91.35$ '$+$', $84.16$ '$\Delta$', $77.86$ '$\Diamond$', $72.19$ '$\circ$', $67.03$ '$*$', and $62.29$ '$\square$'. The reference errors from this table are also illustrated in the plots in the top row as dotted horizontal lines.
}
    \label{fig:n32brib}
\end{figure}
\begin{figure}
    \centering
    %\tikzsetnextfilename{05errdev256brib}
    %\input{./img1/08errdevbrib.tex}
    \includegraphics[width=0.9\textwidth]{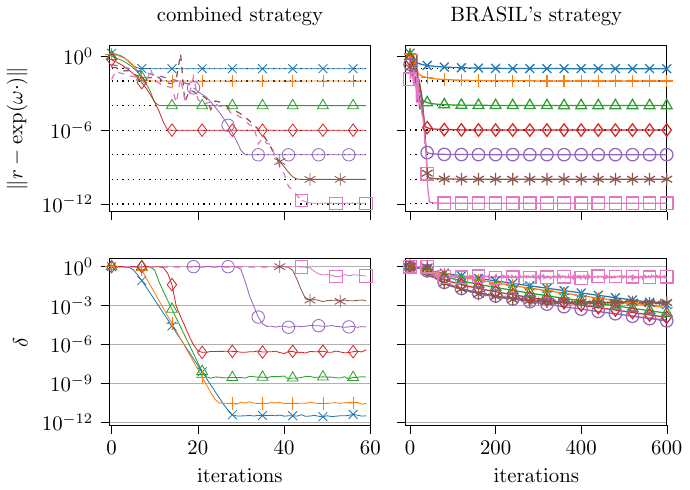}
    \caption{
    These plots show the error $\|r-\exp(\omega\cdot)\|$ and the error in uniformity $\delta$ of the interpolation-based algorithm over the number of iterations using different strategies for interpolation nodes correction similar to \figurename~\ref{fig:n32brib}. The present figure shows results for $n=256$ and $\omega = 797.18$ '$\times$', $791.45$ '$+$', $780.93$ '$\Delta$', $771.16$ '$\Diamond$', $761.89$ '$\circ$', $753.01$ '$*$', and $744.44$ '$\square$' as provided in Table~\ref{table:wanderror}. For $\omega=761.89$, $753.01$ and $744.44$ the combined strategy applies BRASIL's strategy in an initial phase, and for these iterations the errors are illustrated by dashed lines without marks in the plots. Otherwise, the combined strategy applies the strategy of Maehly's second method for the present examples.
    }
    \label{fig:n256brib}
\end{figure}

In figures~\ref{fig:n32AAAL} and~\ref{fig:n256AAAL} we illustrate the performance of the AAA-Lawson method for $n=32$ and $n=256$. The error (top row) and the error in uniformity (bottom row) are plotted over the number of Lawson iterations. The three columns in each figure correspond to data for adaptively chosen test nodes using the implementation of~\cite{DNT24} (left column), and different numbers of equispaced test nodes (middle \& right columns). For the latter we choose a smaller and larger number of equispaced test nodes s.t.\ the AAA-Lawson method shows convergence for most $\omega$. In particular, for $n=32$ we choose $1100$ and $4900$ equispaced test nodes on $[-1,1]$ whereas the implementation of~\cite{DNT24} is using $640$ test nodes. For $n=256$ we choose $6000$ and $35\,000$ equispaced test nodes and the implementation of~\cite{DNT24} is using $5120$ test nodes. We remark that the performance of the AAA-Lawson method improves with the number of test nodes to some point but not consistently, and might be limited by the discretization of $\mathrm{e}^{\mathrm{i}\omega x}$ at the test nodes. In particular, considering the error in uniformity the AAA-Lawson method doesn't converge as well as for the interpolation-based algorithm for the presented examples. However, the approximation error attained by the AAA-Lawson method is certainly on the level of the reference error within a small number of Lawson iterations for most $\omega$.

Approximants computed by the AAA-Lawson method might not satisfy the necessary interpolation conditions to define the error in uniformity $\delta$ and in such cases no values are shown for $\delta$ in the corresponding plots. Moreover, for some choices of $n$ and $\omega$ the approximants computed by the AAA-Lawson method fail to attain a non-maximal approximation error in which case no results are shown for the respective error plots.

\begin{figure}
    \centering
    %\tikzsetnextfilename{07errdev032AAAL}
    %\input{./img1/edn0032AAAC.tex}
    \includegraphics[width=0.9\textwidth]{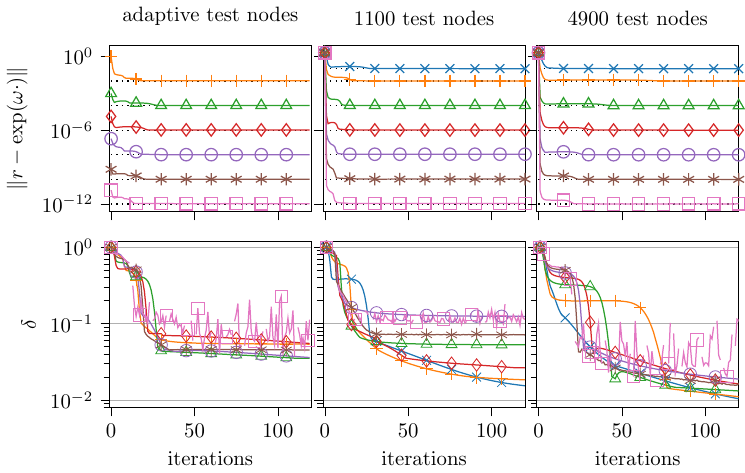}
    \caption{
    These plots illustrate the performance of the AAA-Lawson method using different sets of test nodes for $n=32$ and different frequencies $\omega$ over the number of Lawson iterations. The top and bottom rows show the error $\|r-\exp(\omega\cdot)\|$ and the error in uniformity $\delta$, respectively, for test nodes which are adjusted on the run (left column), $1100$ equispaced test nodes (middle column) and $4900$ equispaced test nodes (right column). In each plot different lines show results for the different frequencies $\omega$ provided in Table~\ref{table:wanderror}, i.e., $\omega = 95.48$ '$\times$', $91.35$ '$+$', $84.16$ '$\Delta$', $77.86$ '$\Diamond$', $72.19$ '$\circ$', $67.03$ '$*$', and $62.29$ '$\square$'. The reference errors from this table are also illustrated in the plots in the top row as dotted horizontal lines.
    No results are shown for the error in uniformity in case the computed approximant does not attain $2n+1$ interpolation nodes. Moreover, no results are shown for the errors in case the computed approximant fails to approach a non-maximal error.}
    \label{fig:n32AAAL}
\end{figure}

\begin{figure}
    \centering
    %\tikzsetnextfilename{09errdev256AAAL}
    %\input{./img1/edn0256AAAC.tex}
    \includegraphics[width=0.9\textwidth]{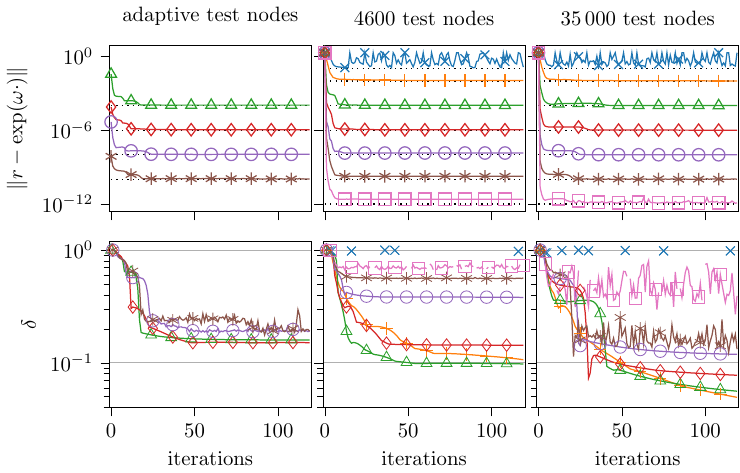}
    \caption{
    These plots show error $\|r-\exp(\omega\cdot)\|$ and error in uniformity $\delta$ of approximants constructed by the AAA-Lawson method using test nodes which are adjusted on the run (left column), $4600$ equispaced test nodes (middle column) and $35\,000$ equispaced test nodes (right column) over the number of Lawson iterations similar to \figurename~\ref{fig:n32AAAL}.  The present figure shows results for $n=256$ and $\omega = 797.18$ '$\times$', $791.45$ '$+$', $780.93$ '$\Delta$', $771.16$ '$\Diamond$', $761.89$ '$\circ$', $753.01$ '$*$', and $744.44$ '$\square$' as provided in Table~\ref{table:wanderror}.}
    \label{fig:n256AAAL}
\end{figure}

Performance of the AAA method, without Lawson iteration, is evaluated in Table~\ref{table:errorsAAA} for degrees $n=32$ and $n=256$, using adaptively chosen test nodes as well as equispaced test nodes ($4900$ and $35\,000$ equispaced test nodes for $n=32$ and $n=5200$, respectively).

\begin{table}
\caption{Error of approximants computed by the AAA method with equispaced test nodes and adaptively chosen test nodes are labeled as 'AAA' and 'aAAA', respectively, in this table. For 'AAA' we use $4900$ and $35\,000$ equispaced test nodes for $n=32$ and $n=256$, respectively.}
\label{table:errorsAAA}
\begin{tabular}{|c|c|c||c|c|c|}
\hline
\multicolumn{3}{|c||}{$n=32$}&\multicolumn{3}{c|}{$n=256$}\\\hline
$\omega$&AAA $\approx$&aAAA $\approx$&
$\omega$&AAA $\approx$&aAAA $\approx$\\\hline
$95.48$&$1.32$&$2.00$
& $797.18$&$2.00$&$2.00$
\\ $91.35$&$1.17\cdot10^{-1}$&$1.22$
& $791.45$&$5.37\cdot10^{-1}$&$2.00$
\\ $84.16$&$5.47\cdot10^{-4}$&$1.21\cdot10^{-3}$
& $780.93$&$4.35\cdot 10^{-3}$&$3.37\cdot 10^{-2}$
\\ $77.86$&$3.05\cdot10^{-5}$&$1.63\cdot10^{-5}$
& $771.16$&$5.31\cdot 10^{-5}$&$6.69\cdot 10^{-5}$
\\ $72.19$&$2.09\cdot10^{-7}$&$1.55\cdot10^{-7}$
& $761.89$&$3.89\cdot 10^{-7}$&$4.33\cdot 10^{-6}$
\\ $67.03$&$2.12\cdot10^{-9}$&$7.41\cdot10^{-10}$
& $753.01$&$3.11\cdot 10^{-9}$&$1.01\cdot10^{-8}$
\\ $62.29$&$1.07\cdot10^{-11}$&$1.16\cdot10^{-11}$
& $744.44$& $2.33\cdot 10^{-11}$&$2.69\cdot 10^{-10}$
\\
\hline
\end{tabular}
\centering
\end{table}

{\bf Conclusions.}
The interpolation-based algorithm shows to approach the reference error consistently, which is also the case for the AAA-Lawson method with a few exceptions. While both approaches can be used to compute approximants of a certain accuracy for small and moderate degrees $n$, we remark that the interpolation-based algorithm succeeds to do so even for large degrees such as $n=1024$ while the AAA-Lawson method fails to compute an approximant in reasonable time in this case.

Considering the error in uniformity, the interpolation-based algorithm outperforms the AAA-Lawson method. In particular, for the latter the error in uniformity shows to be stagnating at moderate levels for the presented examples while for the interpolation-based algorithm the error in uniformity approaches the smallest values feasible for the underlying computer arithmetic. In particular, the combined strategy for nodes correction suggested in Subsection~\ref{subsec:bestalgorithm} achieves this level of precision within a small number of iterations.
Thus, while the interpolation-based algorithm and the AAA-Lawson method are both suitable to construct approximants of certain error levels, the former is clearly favorable to compute the unitary best approximant with higher requirements on equioscillatory properties.

\end{document}